\newtheorem{remark}[theorem]{Remark}
\newtheorem{assumption}[theorem]{Assumption}
\newtheorem{algorithm}[theorem]{Algorithm}
\newcommand{\dd}{\mathrm{d}}
\newcommand{\ii}{\mathrm{i}}
\newcommand{\ignore}[1]{}
\newcommand{\norm}[1]{\left\| #1 \right\|}
\newcommand{\notmid}{\mid\kern-0.5em\not\kern0.5em}
\newcommand{\ba}{\begin{array}}
\newcommand{\ea}{\end{array}}
\DeclareMathOperator*{\argmin}{arg\,min}
\numberwithin{equation}{section} % let the number of the equation reset for new section.
\title{Inverse boundary value problem for the Helmholtz equation:       Multi-level approach and iterative reconstruction}
\author{Elena Beretta\thanks{Dipartimento di Matematica "Brioschi",
    Politecnico di Milano, Milano and Dipartimento di Matematica
    "Castelnuovo" Universita' di Roma "La Sapienza", Roma, Italy ({\tt
      beretta@mat.uniroma1.it, elena.beretta@polimi.it})}
\and
Maarten V. de Hoop
\thanks{Department of Mathemematics, Purdue
  University, West Lafayette, IN 47907 ({\tt mdehoop@purdue.edu}).}
\and Lingyun Qiu
\thanks{Institute for Mathematics and its Applications ,
  University of Minnesota, Minneapolis, MN 55455 ({\tt qiu.lingyun@ima.umn.edu}).}
\and Otmar Scherzer\thanks{Computational Science Center, University of
  Vienna, Nordbergstr. 15, A-1090 Vienna, Austria ({\tt
    otmar.scherzer@univie.ac.at}).}  }
\begin{document}
\maketitle \setcounter{page}{1}
\cfoot{\thepage} \rfoot{}

\begin{abstract}
We study the inverse boundary value problem for the Helmholtz equation
using the Dirichlet-to-Neumann map at selected frequency as the
data. We develop an explicit reconstruction of the wavespeed using a
multi-level nonlinear projected steepest descent iterative scheme in
Banach spaces. We consider wavespeeds containing discontinuities. A
conditional Lipschitz stability estimate for the inverse problem holds
for wavespeeds of the form of a linear combination of piecewise
constant functions with an underlying domain partitioning, and gives a
framework in which the scheme converges. The stability constant grows
exponentially as the number of subdomains in the domain partitioning
increases. To mitigate this growth of the stability constant, we
introduce hierarchical compressive approximations of the solution to
the inverse problem with piecewise constant functions. We establish an
optimal bound of the stability constant, which leads to a
condition on the compression rate pertaining to these approximations.
\end{abstract}

\maketitle

\section{Introduction}
\label{sec:Intro}

In this paper, we study the inverse boundary value problem for the
Helmholtz equation using the Dirichlet-to-Neumann map at selected
frequency as the data. We focus on developing an explicit iterative
reconstruction of the wavespeed. This inverse problem arises, for
example, in reflection seismology and inverse obstacle scattering
problems for electromagnetic waves \cite{Bao2005a, Symes2009,
  Bao2010}. We consider wavespeeds containing discontinuities.

Uniqueness of the mentioned inverse boundary value problem was
established by Sylvester \& Uhlmann \cite{Sylvester1987} assuming that
the wavespeed is a bounded measurable function. This inverse problem
has been extensively studied from an optimization point of view. We
mention, in particular, the work of \cite{Hadj-Ali2008}. Using
multi-frequency data, to intuitively stabilize the iterative schemes,
so-called frequency progression has been introduced \cite{Bunks1995,
  Sirgue2004, Bao2007, Chen2009}. Frequency progression also appears
naturally in our approach.

It is well known that the logarithmic character of stability of the
inverse boundary value problem for the Helmholtz equation
\cite{Alessandrini1988, Novikov2011} cannot be avoided. In fact, in
\cite{Mandache2001} Mandache proved that despite of regularity or
a-priori assumptions of any order on the unknown wavespeed,
logarithmic stability is optimal. However, conditional Lipschitz
stability estimates can be obtained: Accounting for discontinuities,
such an estimate holds if the unknown wavespeed is a finite linear
combination of piecewise constant functions with an underlying known
domain partitioning \cite{Beretta2012}. We can extend this result to a
framework of discrete approximations of the unique solution. It was
obtained following an approach introduced by Alessandrini and Vessella
\cite{Alessandrini2005} and further developed by Beretta and Francini
\cite{Beretta2011} for Electrical Impedance Tomography (EIT). Here, we
revisit the Lipschitz stability estimate for the full
Dirichlet-to-Neumann map using complex geometrical optics solutions
which gives rise to an optimal bound of the Lipschitz constant in
terms of the number of subdomains in the domain partitioning.  We
develop the estimate in $L^2(\Omega)$. This result aids in the design
of our multi-level iterative scheme for reconstruction through
increasing the number of subdomains. % and using multi-frequency data.

To be more precise, we introduce a multi-scale hierarchy of
compressive approximations, while refining the domain partitioning, of
the unique solution to the inverse problem with piecewise constant
functions. Assuming a bound on the compression, we mitigate the
mentioned growth of the stability constant in a multi-level projected
steepest descent method yielding a condition which couples the
approximation errors and stability constants between neighboring
levels \cite{Hoop2012a}. Tracking the domain partitioning dependencies in the
approximation errors we then arrive at a procedure to refine the domain partitioning, which are related to the scales, guaranteeing convergence
from level to level, that is, to progressively more accurate
approximations following the multi-scale hierarchy. As a part of
the analysis, we study the Fr\'{e}chet differentiability of the direct
problem and obtain the frequency and domain partitioning dependencies of the relevant
constants away from the Dirichlet spectrum. Our results hold for
finite-frequency data including frequencies arbitrarily close to zero
while avoiding Dirichlet eigenfrequencies; in view of the estimates,
inherently, there is a finest scale which can be reached.

% To avoid the use of $L^\infty(\Omega)$, which is neither a convex,
% smooth nor reflexive Banach space, we consider $L^\infty(\Omega)$ as a
% convex subset of $L^p(\Omega)$. P\"oschl, Resmerita and Scherzer
% \cite{Poschl2010} discussed whether, given a direct problem formulated
% in $L^{\infty}$, the inverse problem should be considered in
% $L^{\infty}$ or whether is it more appropriate to formulate it in some
% $L^p$-space with $1 < p < \infty$ and to consider additional
% $L^\infty$ bound constraints. In $L^\infty$ one can construct examples
% of weak star convergent sequences which are not convergent, for
% example, in $L^1$. From this perspective, a numerical analysis is
% preferable in $L^p$ spaces, where $L^\infty$ is embedded and a
% topology $\tau_d$ is introduced by a pseudometric
% \[
%    d(u,v) = \norm{u - v}_p
%             + |\norm{u}_\infty - \norm{v}_\infty| .
% \]
% For instance, there exist piecewise constant Ansatz functions, which
% approximate an $L^\infty$ function in the $L^p$-sense but also the
% $L^\infty$-norm (but not the function).

\subsection*{Frequency data}

In many applications such as reflection seismology, frequency data are
obtained from solutions to the corresponding boundary value problem
for the wave equation by applying a Fourier transform. Let $\Omega$ be
a bounded $C^{1}$ domain in $\mathbb{R}^3$ and $c = c(x)$ be a
strictly positive bounded measurable function.  We consider the
boundary value problem for the wave equation
\[
\left\{
\begin{aligned}
  &\partial_t^2 u(x,t) - c^2(x) \Delta u(x,t) = 0, &&\quad (x,t) \in \Omega \times  (0,\infty) ,
  \\
  &u(x,t) = f(x,t) , &&\quad (x,t) \in \partial\Omega  \times (0,\infty) ,
  \\
  &u(x,0) = 0, \,\, \partial_t u(x,0) = 0 ,&&\quad x\in \Omega .
\end{aligned}
\right.
\]
The hyperbolic Dirichlet-to-Neumann map, $\check{\Lambda}_{c^{-2}}$,
is given by
\[
\begin{aligned}
  \check{\Lambda}_{c^{-2}} :\ H
  & \rightarrow && L^2(\partial \Omega \times (0,\infty)) ,
\\
  f & \mapsto &&
  \partial_\nu u^f \mid_{\partial\Omega \times (0,\infty)} ,
\end{aligned}
\]
where $\partial_\nu$ denotes the normal derivative at $\partial\Omega$
and $H = \{ f \in H^1(\partial \Omega \times (0,\infty)) \mid f(x,0)
= 0 \}$. One, indeed, may take the Fourier transform of $\partial_\nu
u^f$, since it is a tempered distribution \cite{Lasiecka1983}, and
thus obtain multi-frequency data.

\section{Direct problem}
\label{sec:Direct}

We describe the direct problem and some properties of the data, that
is, the Dirichlet-to-Neumann map. We formulate the direct problem as a
nonlinear operator mapping $L^2(\Omega)$ to
$\mathcal{L}(H^{1/2}(\partial \Omega), H^{-1/2}(\partial \Omega))$.
% Throughout this paper, we assume that $p \geq 3/2$ and $q$ stands
% for its H\"older conjugate defined by $1/p + 1/q = 1$.
We invoke

\medskip\medskip

\begin{assumption}\label{Aprioribound}
There exist two positive constants $B_1, B_2$ such that
\begin{equation}
   B_1 \leq c^{-2} \leq B_2\quad\text{in }\Omega.
\end{equation}
\end{assumption}

\medskip\medskip

Following the above mentioned Fourier transform, we consider
the boundary value problem,
\begin{equation} \label{Helmholtz}
\left\{\begin{array}{rl}
   (-\Delta - \omega^2 c^{-2}(x)) u = &  0 ,\quad  \mbox{ in }\Omega ,\\
   u = & g \quad \mbox{ on } \partial \Omega.
   \end{array}
   \right.
\end{equation}

\medskip\medskip

We summarize some results concerning the well-posedness of problem
(\ref{Helmholtz}) which we will use in the proofs of the properties of
the Dirichlet-to-Neumann map.
% Let us denote by $\{\lambda_k\}_{k=1}^{\infty}$ the Dirichlet
% eigenvalues of $-\Delta$ in $\Omega$.

\medskip\medskip

\begin{proposition}\label{2-energy}
Let $\Omega$ be a bounded Lipschitz domain in $\mathbb{R}^3$, $f \in
L^{2}(\Omega)$, $g \in H^{1/2}(\partial\Omega)$ and $c^{-2}\in
L^{\infty}(\Omega)$ satisfying (\ref{Aprioribound}). Then, there
exists a discrete set $\Sigma_{c^{-2}} := \{ \tilde\lambda_n\quad
|\tilde\lambda_n > 0 ,\ \forall n \in \mathbb{N}\}$ such that, for any
$\omega^2 \in \mathbb{C}\backslash \Sigma_{c^{-2}}$, there exists a
unique solution $u \in H^1(\Omega)$ of
\begin{equation} \label{Helmholtz1}
\left\{\begin{array}{rl}
   (-\Delta - \omega^2 c^{-2}(x)) u
                = &  f\ \ \quad  \mbox{ in } \Omega ,\\
   u = & g \quad \mbox{ on } \partial \Omega .
   \end{array}
   \right.
\end{equation}
Furthermore, there exists a positive constant $C$ such that
\begin{equation}\label{energy_est1}
\| u \|_{H^{1}(\Omega)} \le C \left( \| g \|_{H^{1/2}(\partial \Omega)} + \| f \|_{L^{2}(\Omega)} \right) ,
\end{equation}
where  $C=C( B_2, \omega^2, \Omega, d)$ where $d:=\textrm{dist}(\omega^2, \Sigma_{c^{-2}})$ and $C$ blows up as $d\rightarrow 0$.
\end{proposition}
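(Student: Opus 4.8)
The plan is to reduce the inhomogeneous boundary data to the homogeneous case, recast (\ref{Helmholtz1}) as an operator equation amenable to the Fredholm alternative, and then extract the quantitative energy estimate from a spectral (eigenfunction) expansion. First I would lift the boundary data: since $\Omega$ is Lipschitz and $g \in H^{1/2}(\partial\Omega)$, there is an extension $G \in H^{1}(\Omega)$ with $G|_{\partial\Omega} = g$ and $\|G\|_{H^{1}(\Omega)} \le C_\Omega \|g\|_{H^{1/2}(\partial\Omega)}$. Writing $u = v + G$ with $v \in H^1_0(\Omega)$ turns (\ref{Helmholtz1}) into the weak problem of finding $v \in H^1_0(\Omega)$ such that $\int_\Omega \nabla v \cdot \nabla w \,\dd x - \omega^2 \int_\Omega c^{-2} v w \,\dd x = \dis{F}{w}$ for all $w \in H^1_0(\Omega)$, where $F \in H^{-1}(\Omega)$ collects $f$ and the contributions of $G$ and satisfies $\|F\|_{H^{-1}} \le C(\|f\|_{L^2} + \|g\|_{H^{1/2}})$ by the bound $c^{-2} \le B_2$.

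Next I would set up the spectral framework. The bilinear form $a(u,w) = \int_\Omega \nabla u \cdot \nabla w \,\dd x + \int_\Omega c^{-2} u w \,\dd x$ is coercive on $H^1_0(\Omega)$ (a G{\aa}rding-type inequality, using $B_1 \le c^{-2}$), so $-\Delta + c^{-2}$ is boundedly invertible from $H^{-1}$ to $H^1_0$, and the solution operator composed with multiplication by $c^{-2}$ is a compact operator $T$ on $L^2(\Omega)$ by Rellich's theorem; it is self-adjoint with respect to the weighted inner product $\dis{\cdot}{\cdot}_{c^{-2}} = \int_\Omega c^{-2}(\cdot)(\cdot)\,\dd x$. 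Equivalently, the weighted eigenvalue problem $-\Delta\phi_n = \tilde\lambda_n c^{-2}\phi_n$, $\phi_n \in H^1_0(\Omega)$, has a discrete set of positive eigenvalues $\tilde\lambda_n \to \infty$, with $\{\phi_n\}$ an orthonormal basis of $L^2(\Omega, c^{-2}\,\dd x)$; this defines $\Sigma_{c^{-2}}$. Recasting the weak problem as $(I - \omega^2 T)v = A^{-1}F$, the Fredholm alternative yields existence and uniqueness of $v$ precisely when $\omega^2 \notin \Sigma_{c^{-2}}$.

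For the energy estimate I would expand $v = \sum_n v_n \phi_n$ and test against $\phi_m$, obtaining $v_m(\tilde\lambda_m - \omega^2) = \dis{F}{\phi_m}$, hence $v_m = \dis{F}{\phi_m}/(\tilde\lambda_m - \omega^2)$. Since $|\tilde\lambda_m - \omega^2| \ge d = \operatorname{dist}(\omega^2, \Sigma_{c^{-2}})$ for every $m$, summing controls $\|v\|_{L^2}$ by $d^{-1}$ times the data norm; feeding this back through the identity $\|\nabla v\|_{L^2}^2 = \omega^2\int_\Omega c^{-2}|v|^2\,\dd x + \dis{F}{v}$ (testing with $w = v$, up to conjugation for complex $\omega^2$) bounds $\|\nabla v\|_{L^2}$, and adding $\|G\|_{H^1}$ produces (\ref{energy_est1}) with $C = C(B_2, \omega^2, \Omega, d)$ blowing up as $d \to 0$.

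The main obstacle is precisely this last step: converting the qualitative Fredholm solvability into the sharp $d^{-1}$ resolvent bound while tracking the full $H^1$ norm with the stated constant dependence. This is why the self-adjointness of $T$ in the weighted inner product is the key structural ingredient, since it guarantees that the resolvent norm equals the reciprocal distance to the spectrum; without it one would only obtain a nonexplicit constant from the open mapping theorem, losing the quantitative blow-up rate in $d$ that the later multi-level analysis requires.
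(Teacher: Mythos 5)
Your proposal follows essentially the same route as the paper's proof: reduce to homogeneous boundary data by an $H^1$ lifting of $g$, recast \eqref{Helmholtz1} as a compact-operator equation whose operator is self-adjoint in a suitable (weighted) inner product, so that the Fredholm alternative and the discrete generalized spectrum $-\Delta\phi_n=\tilde\lambda_n c^{-2}\phi_n$ define $\Sigma_{c^{-2}}$, then obtain the $L^2$ bound from the spectral expansion using $|\tilde\lambda_m-\omega^2|\ge d$ and the gradient bound by testing the equation with the solution itself. The only differences are cosmetic: the paper lifts $g$ at the end rather than the beginning and uses $K=(-\Delta)^{-1}M_{c^{-2}}$ rather than your shifted operator $T=(-\Delta+c^{-2})^{-1}M_{c^{-2}}$, with which the equation should actually read $(I-(1+\omega^2)T)v=A^{-1}F$ --- a harmless slip, since $T$ and $K$ encode the same characteristic values $\tilde\lambda_n$ and yield the same set $\Sigma_{c^{-2}}$.
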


\begin{proof}
We first prove the result for $g=0$. Consider the linear operator
$K=(-\Delta)^{-1}M_{c^{-2}}: L^2\rightarrow H^1_0(\Omega)$ where
$M_{c^{-2}}: L^2(\Omega)\rightarrow L^2(\Omega)$ is the multiplication
operator $h\rightarrow c^{-2}h$.  Then, problem (\ref{Helmholtz1}) is
equivalent to
\begin{equation}\label{eqK}
(I-\omega^2K)u=(-\Delta)^{-1}f =: h .
\end{equation}
We observe that $K$ is a compact operator from $L^2(\Omega)$ to
$L^2(\Omega)$. In fact,
$$
   \|Ku\|_{H_0^1(\Omega)}\leq C\|u\|_{L^2(\Omega)} ,
$$
and by the Rellich-Kondrachev compactness theorem $H^1_0(\Omega)
\subset \subset L^2(\Omega)$.  Furthermore, by Assumption
\ref{Aprioribound} and the properties of $(-\Delta)^{-1}$ and of
$M_{c^{-2}}$ it follows that $K$ is self-adjoint and positive. Hence,
$K$ has a discrete set of positive eigenvalues $\{ \alpha_n
\}_{n\in\mathbb{N}}$ such that $\alpha_n \rightarrow 0$ as $n
\rightarrow \infty$. Let $\tilde\lambda_n := \frac{1}{\alpha_n}, n
\in\mathbb{N}$ and define $\Sigma_{c^{-2}} := \{
\tilde{\lambda}_n\ :\ n \in \mathbb{N}\}$ and let $\omega^2 \in
\mathbb{C} \backslash \Sigma_{c^{-2}}$. Then, by the Fredholm
alternative, there exists a unique solution $u \in H^1_0(\Omega)$ of
(\ref{eqK}).

To prove estimate (\ref{energy_est1}) we observe that
$$
u = \sum_{n=1}^{\infty} \langle u,e_n \rangle e_n ,\quad
K u =\sum_{n=1}^{\infty}\alpha_n \langle u,e_n \rangle e_n
$$
where $\{e_n\}_{n\in\mathbb{N}}$ is an orthonormal basis of $L^2(\Omega)$.
Hence we can rewrite (\ref{eqK}) in the form
$$
\sum_{n=1}^{\infty}(1-\omega^2\alpha_n) \langle u,e_n \rangle e_n
=\sum_{n=1}^{\infty} \langle h_n,e_n \rangle e_n
$$
Hence,
$$
   \langle u,e_n \rangle
    = \frac{1}{1-\frac{\omega^2}{\tilde\lambda_n}}
    \langle h, e_n \rangle,\quad\forall n\in\mathbb{N}
$$
and
$$
   u=\sum_{n=1}^{\infty}
     \frac{1}{1-\frac{\omega^2}{\tilde\lambda_n}}
        \langle h,e_n \rangle e_n
$$
so that
\begin{equation}\label{est_u}
\|u\|_{L^2(\Omega)}\leq \left(1+\frac{\omega^2}{d(\omega^2, \Sigma_{c^{-2}})}\right)\|h\|_{L^2(\Omega)}\leq \left(1+\frac{\omega^2}{d(\omega^2, \Sigma_{c^{-2}})}\right)\|f\|_{L^2(\Omega)}
\end{equation}
Now by multiplying equation (\ref{Helmholtz1}) by $u$, integrating by parts, using Schwartz' inequality, Assumptions (\ref{Aprioribound}) and (\ref{est_u}) we derive
\begin{equation}\label{est_grad}
\|\nabla u\|_{L^2(\Omega)}\leq C\left(1+\frac{\omega^2}{d(\omega^2, \Sigma_{c^{-2}})}\right)\|f\|_{L^2(\Omega)}
\end{equation}
Hence, by (\ref{est_u}) and (\ref{est_grad}) we finally get
$$
\|u\|_{H^1(\Omega)}\leq C\left(1+\frac{\omega^2}{d(\omega^2, \Sigma_{c^{-2}})}\right)\|f\|_{L^2(\Omega)}.
$$
If $g$ is not identically zero then we reduce the problem to the previous case
by considering $v=u-\tilde g$ where $\tilde g\in H^1(\Omega)$ is such
that $\tilde g=g$ on $\partial\Omega$ and $\|\tilde
g\|_{H^1(\Omega)}\leq \|g\|_{H^{1/2}(\partial\Omega)}$.
\end{proof}

If $\omega^2\in\mathbb{C}\backslash \Sigma_{c^{-2}}$, then, by
(\ref{2-energy}), for any $g$ is in $H^{1/2}(\partial \Omega)$, there
exists a unique solution to $(\ref{Helmholtz})$ which belongs to
$H^1(\Omega)$. Therefore, $\nabla u$ is in $L^2(\Omega)$ and as a
consequence $\nabla u \mid_{\partial \Omega}$ belongs to
$H^{-1/2}(\partial \Omega)$. One can then introduce the
Dirichlet-to-Neumann map, $\Lambda_{\omega^2 c^{-2}}$, according to
\begin{equation}\label{DtN map}
\Lambda_{\omega^2 c^{-2}} \, g
 = \nu \cdot \nabla u \mid_{\partial\Omega}
 = \left. \frac{\partial u}{\partial \nu} \right|_{\partial\Omega} \in H^{-1/2}(\partial\Omega).
\end{equation}

Proceeding similarly as in the proof of Proposition~\ref{2-energy} and
using standard regularity results we can prove also the following

\medskip\medskip

\begin{proposition}\label{p-energy}
Let $\Omega$ be a bounded $C^{1,1}$ domain in $\mathbb{R}^3$,
$c^{-2} \in L^{\infty}(\Omega)$ satisfying (\ref{Aprioribound}), $f \in
L^p(\Omega)$, $g \in W^{2-\frac{1}{p},p}(\partial\Omega)$ with $1 < p
< \infty$. Then, if $\omega^2\in\mathbb{C}\backslash \Sigma_{c^{-2}}$,
there exists a unique solution $u\in W^{2,p}(\Omega)$ to the problem
\begin{equation}\label{Helmholtz-L_2}
\left\{\begin{array}{rl}
   (-\Delta - \omega^2 c^{-2}(x)) u
                = &  f\ \ \quad  \mbox{ in } \Omega ,\\
   u = & g \quad \mbox{ on } \partial \Omega .
   \end{array}
   \right.
\end{equation}
Moreover,
\begin{equation}\label{energy_estp}
\| u \|_{W^{2,p}(\Omega)} \le C ( \| g \|_{W^{2-\frac{1}{p},p}(\partial \Omega)} + \| f \|_{L^{p}(\Omega)} )
\end{equation}
where $C=C( B_2, \omega^2, \Omega, d)$ where $d:=\textrm{dist}(\omega^2, \Sigma_{c^{-2}})$ and $C$ blows up as $d\rightarrow 0$.
\end{proposition}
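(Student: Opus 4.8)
The plan is to mirror the Fredholm-theoretic argument of Proposition~\ref{2-energy}, replacing the $L^2$ framework by the $L^p$ framework and invoking Agmon--Douglis--Nirenberg (ADN) elliptic regularity; the strengthened hypothesis that $\partial\Omega$ be $C^{1,1}$ (rather than merely Lipschitz) is exactly what is needed for the $W^{2,p}$ estimates for the Dirichlet Laplacian. First I would reduce to homogeneous boundary data. By the trace theorem for $C^{1,1}$ domains there is a lifting $\tilde g\in W^{2,p}(\Omega)$ with $\tilde g=g$ on $\partial\Omega$ and $\|\tilde g\|_{W^{2,p}(\Omega)}\le C\|g\|_{W^{2-1/p,p}(\partial\Omega)}$. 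Setting $v=u-\tilde g$, the problem becomes $(-\Delta-\omega^2 c^{-2})v=\tilde f$ in $\Omega$, $v=0$ on $\partial\Omega$, where $\tilde f:=f+\Delta\tilde g+\omega^2 c^{-2}\tilde g\in L^p(\Omega)$ (here I use $c^{-2}\in L^\infty$ so that $c^{-2}\tilde g\in L^p$). It thus suffices to treat $g=0$, with the bound $\|\tilde f\|_{L^p}\le\|f\|_{L^p}+C\|g\|_{W^{2-1/p,p}}$ carried along.

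Next I would introduce $K=(-\Delta)^{-1}M_{c^{-2}}$ as before, now reading $(-\Delta)^{-1}:L^p(\Omega)\to W^{2,p}(\Omega)\cap W^{1,p}_0(\Omega)$ as the ADN solution operator. Since $M_{c^{-2}}:L^p\to L^p$ is bounded, $(-\Delta)^{-1}:L^p\to W^{2,p}$ is bounded, and $W^{2,p}\hookrightarrow\hookrightarrow L^p$ compactly by Rellich--Kondrachev, the composition $K:L^p\to L^p$ is compact. Hence $I-\omega^2 K$ is Fredholm of index zero on $L^p$, and the homogeneous problem is equivalent to $(I-\omega^2K)v=(-\Delta)^{-1}\tilde f=:h\in W^{2,p}\cap W^{1,p}_0$. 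Existence then follows from injectivity.

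The one genuinely new point---the main obstacle---is to show that $I-\omega^2 K$ is injective on $L^p$ for every $p\in(1,\infty)$, i.e.\ that the critical set coincides with the $L^2$ set $\Sigma_{c^{-2}}$ independently of $p$. I would argue by a bootstrap. If $v\in L^p$ solves $v=\omega^2 Kv$, then $v=\omega^2(-\Delta)^{-1}(c^{-2}v)$, so $v\in W^{2,p}$; in three dimensions $W^{2,p}\hookrightarrow L^{q}$ with $1/q=1/p-2/3$ when $p<3/2$, so $c^{-2}v\in L^{q}$ and a further application of elliptic regularity gives $v\in W^{2,q}$. Iterating finitely many times raises the exponent past $3/2$, whence $v\in L^\infty\subset L^2$, so $v\in H^1_0(\Omega)$ solves the homogeneous Helmholtz problem; since $\omega^2\notin\Sigma_{c^{-2}}$, Proposition~\ref{2-energy} forces $v=0$. (For $p\ge 2$ one has directly $W^{2,p}\subset H^1_0$, and the bootstrap is unnecessary.)

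Finally, with $I-\omega^2K$ boundedly invertible on $L^p$, I would recover the estimate. From $v=(I-\omega^2K)^{-1}h$ one gets $\|v\|_{L^p}\le C\|h\|_{L^p}\le C\|\tilde f\|_{L^p}$, and writing $v=(-\Delta)^{-1}(\omega^2 c^{-2}v+\tilde f)$ the ADN estimate yields $\|v\|_{W^{2,p}}\le C(\|v\|_{L^p}+\|\tilde f\|_{L^p})\le C\|\tilde f\|_{L^p}$. Undoing the lifting, $\|u\|_{W^{2,p}}\le\|v\|_{W^{2,p}}+\|\tilde g\|_{W^{2,p}}\le C(\|f\|_{L^p}+\|g\|_{W^{2-1/p,p}})$, which is (\ref{energy_estp}). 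The dependence $C=C(B_2,\omega^2,\Omega,d)$ with blow-up as $d\to 0$ enters through the resolvent norm $\|(I-\omega^2K)^{-1}\|_{L^p\to L^p}$, which diverges as $\omega^2$ approaches $\Sigma_{c^{-2}}$, exactly as the factor $1+\omega^2/d$ does in the $L^2$ estimate (\ref{est_u}).
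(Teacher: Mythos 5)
Your overall strategy---lifting the boundary data, Fredholm theory for $K=(-\Delta)^{-1}M_{c^{-2}}$ on $L^p(\Omega)$ via Calder\'on--Zygmund/ADN regularity on the $C^{1,1}$ domain, and the bootstrap showing that the set of critical values $\omega^2$ is the same for every $p$---is exactly the ``proceed as in Proposition \ref{2-energy} using standard regularity results'' route that the paper invokes without writing out, and those parts of your argument are sound. (In fact, in three dimensions a single bootstrap step suffices: for $1<p<3/2$ one has $W^{2,p}(\Omega)\hookrightarrow L^{q}(\Omega)$ with $q>3>3/2$, hence a second application of elliptic regularity already lands in $W^{2,q}\hookrightarrow L^\infty$.)

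The one genuine gap is in your final paragraph, where the stated dependence $C=C(B_2,\omega^2,\Omega,d)$ is asserted by analogy rather than proved. Abstract Fredholm theory gives, for each fixed $c^{-2}$ and $\omega^2\notin\Sigma_{c^{-2}}$, \emph{some} finite norm $\|(I-\omega^2K)^{-1}\|_{L^p\to L^p}$, but it gives no control of that norm in terms of $d=\operatorname{dist}(\omega^2,\Sigma_{c^{-2}})$ alone, uniformly over the admissible class of coefficients. The bound $1+\omega^2/d$ in the $L^2$ case came from the eigenfunction expansion of the compact, self-adjoint, positive operator $K$; on $L^p$ the operator is not self-adjoint, and for a non-normal operator the distance to the spectrum does not bound the resolvent. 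The claim is true, but it needs a reduction to the $L^2$ resolvent. For $p\ge 2$: given $h\in L^p(\Omega)\subset L^2(\Omega)$, the $L^2$ theory gives $\|v\|_{L^2(\Omega)}\le (1+\omega^2/d)\|h\|_{L^2(\Omega)}$, and then $v=h+\omega^2Kv$ with $Kv=(-\Delta)^{-1}(c^{-2}v)\in H^2(\Omega)\hookrightarrow L^\infty(\Omega)$ yields
\begin{equation*}
  \|v\|_{L^p(\Omega)}\le \|h\|_{L^p(\Omega)}
     + C(\Omega)\,\omega^2 B_2\left(1+\frac{\omega^2}{d}\right)\|h\|_{L^p(\Omega)} .
\end{equation*}
For $1<p<2$: use the conjugation identity $I-\omega^2K^{*}=M_{c^{-2}}(I-\omega^2K)M_{c^{2}}$, valid because $(-\Delta)^{-1}$ and $M_{c^{-2}}$ are self-adjoint, together with duality $\|(I-\omega^2K)^{-1}\|_{L^p\to L^p}=\|(I-\omega^2K^{*})^{-1}\|_{L^{p'}\to L^{p'}}$, to reduce to the exponent $p'>2$ at the cost of a factor $B_2/B_1$. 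With this addition, your closing ADN estimate does produce the constant with the claimed dependence and blow-up as $d\to 0$.
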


The constants appearing in the estimates of (\ref{2-energy}) and
(\ref{p-energy}) depend on $c^{-2}$ and $ \Sigma_{c^{-2}}$ which are
unknown. To our purposes it would be convenient to have constants
depending only on a priori parameters $B_1,B_2$ and known
parameters.
%\textbf{[Q1: if we make $\omega$ complex, can't we avoid
 %$\Sigma_{c^{-2}}$?]}
 In fact, we can prove the following

\medskip\medskip

\begin{proposition}\label{uniformconstants}
Suppose that the assumptions of Proposition \ref{2-energy} (Proposition \ref{p-energy}) are satisfied. Let $\{\lambda_n\}_{n\in\mathbb{N}}$ denote the Dirichlet eigenvalues of $-\Delta$. Then, for any $n\in\mathbb{N}$,
\begin{equation} \label{eigenvaluebound}
\frac{\lambda_n}{B_2}\leq\tilde\lambda_n\leq\frac{\lambda_n}{B_1} .
 \end{equation}
If $\omega^2$ is such that,
 \begin{equation}\label{smallfr}
 0<\omega^2\leq\omega^2_0<\frac{\lambda_1}{B_2},
 \end{equation}
 or, for some $n\geq 1$,
 \begin{equation}\label{higherfr}
 \frac{\lambda_n}{B_1}<\omega_1^2\leq \omega^2\leq \omega_2^2< \frac{\lambda_{n+1}}{B_2} ,
 \end{equation}
then there exists a unique solution $u\in H^1(\Omega)$ $(u\in
W^{2,p}(\Omega)$) of (\ref{Helmholtz}) while estimate
(\ref{energy_est1}) ((\ref{energy_estp})) is satisfied and
$C=C(B_1,B_2, \omega^2_0,\Sigma)$ $(C=C(B_1,B_2,
\omega_1^2,\omega^2_2,\Sigma ))$, where
$\Sigma:=\{\lambda_n\}_{n\in\mathbb{N}}$ and $C$ blows up as
$|\omega_0^2-\frac{\lambda_1}{B_2}|\rightarrow 0$ $(\min
(|\omega_1^2-\frac{\lambda_n}{B_1}|,|\omega_2^2-\frac{\lambda_{n+1}}{B_2}|)
\rightarrow 0)$.
 \end{proposition}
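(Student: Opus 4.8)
The plan is to prove the statement in three stages. First I establish the comparison (\ref{eigenvaluebound}) between the weighted eigenvalues $\tilde\lambda_n$ and the Dirichlet eigenvalues $\lambda_n$ via the min--max principle. Then I use (\ref{eigenvaluebound}) to show that each of the frequency conditions (\ref{smallfr}) and (\ref{higherfr}) forces $\omega^2$ to lie strictly between two consecutive elements of $\Sigma_{c^{-2}}$, so that Proposition~\ref{2-energy} (respectively Proposition~\ref{p-energy}) applies and yields existence, uniqueness, and the estimate (\ref{energy_est1}) (respectively (\ref{energy_estp})). Finally I bound from below the distance $d = \mathrm{dist}(\omega^2,\Sigma_{c^{-2}})$ by quantities depending only on $B_1,B_2$ and the known spectrum $\Sigma$, and substitute this bound into the constant $C$ of the preceding propositions, which depends on the unknowns only through $d$.

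For the first stage, I recall from the proof of Proposition~\ref{2-energy} that $\tilde\lambda_n = 1/\alpha_n$, where $\alpha_n$ are the eigenvalues of $K = (-\Delta)^{-1}M_{c^{-2}}$; equivalently, $\tilde\lambda_n$ is the $n$-th eigenvalue of the generalized Dirichlet problem $-\Delta u = \tilde\lambda\, c^{-2} u$ in $\Omega$, $u = 0$ on $\partial\Omega$. The Courant--Fischer characterization then gives
\[
\tilde\lambda_n = \min_{\substack{V\subset H^1_0(\Omega)\\ \dim V = n}} \ \max_{u\in V\setminus\{0\}} \frac{\int_\Omega |\nabla u|^2}{\int_\Omega c^{-2}|u|^2} , \qquad \lambda_n = \min_{\substack{V\subset H^1_0(\Omega)\\ \dim V = n}} \ \max_{u\in V\setminus\{0\}} \frac{\int_\Omega |\nabla u|^2}{\int_\Omega |u|^2} .
\]
Since Assumption~\ref{Aprioribound} gives $B_1\int_\Omega |u|^2 \le \int_\Omega c^{-2}|u|^2 \le B_2\int_\Omega |u|^2$, the weighted Rayleigh quotient is pinched between $B_2^{-1}$ and $B_1^{-1}$ times the unweighted one, uniformly in $u$; taking the min--max over identical subspaces yields $\lambda_n/B_2 \le \tilde\lambda_n \le \lambda_n/B_1$, which is (\ref{eigenvaluebound}).

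For the second stage, under (\ref{smallfr}) we have $\omega^2 \le \omega_0^2 < \lambda_1/B_2 \le \tilde\lambda_1 = \min_n \tilde\lambda_n$, so $\omega^2 \notin \Sigma_{c^{-2}}$; under (\ref{higherfr}), the bound (\ref{eigenvaluebound}) gives $\tilde\lambda_n \le \lambda_n/B_1 < \omega_1^2 \le \omega^2 \le \omega_2^2 < \lambda_{n+1}/B_2 \le \tilde\lambda_{n+1}$, so again $\omega^2\notin\Sigma_{c^{-2}}$, lying strictly in the gap $(\tilde\lambda_n,\tilde\lambda_{n+1})$. In either case $\omega^2 \in \mathbb{C}\setminus\Sigma_{c^{-2}}$, and Proposition~\ref{2-energy} (Proposition~\ref{p-energy}) provides the unique solution together with the corresponding estimate. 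The monotonicity $\tilde\lambda_1\le\tilde\lambda_2\le\cdots$ also identifies the nearest point of the spectrum in each regime.

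The final and most delicate stage is to make the constant uniform. Under (\ref{smallfr}) the nearest eigenvalue is $\tilde\lambda_1$, whence $d = \tilde\lambda_1 - \omega^2 \ge \lambda_1/B_2 - \omega_0^2 > 0$; under (\ref{higherfr}) the relevant neighbors are $\tilde\lambda_n$ and $\tilde\lambda_{n+1}$, so
\[
d = \min\bigl(\omega^2 - \tilde\lambda_n,\ \tilde\lambda_{n+1} - \omega^2\bigr) \ge \min\Bigl(\omega_1^2 - \frac{\lambda_n}{B_1},\ \frac{\lambda_{n+1}}{B_2} - \omega_2^2\Bigr) > 0 .
\]
Both lower bounds depend only on $B_1,B_2$, the frequency cutoffs, and the known spectrum $\Sigma$. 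Since the constant $C$ in (\ref{energy_est1}) depends on $c^{-2}$ and $\Sigma_{c^{-2}}$ only through $d$ (by the explicit form displayed in (\ref{est_u}), essentially $1+\omega^2/d$) and blows up solely as $d\to 0$, substituting these lower bounds yields $C = C(B_1,B_2,\omega_0^2,\Sigma)$ (respectively $C(B_1,B_2,\omega_1^2,\omega_2^2,\Sigma)$), with exactly the degeneration stated as $|\omega_0^2 - \lambda_1/B_2|\to 0$ or as $\min(|\omega_1^2-\lambda_n/B_1|,|\omega_2^2-\lambda_{n+1}/B_2|)\to 0$. The main obstacle is the bookkeeping in this last stage: one must confirm that the $W^{2,p}$ estimate of Proposition~\ref{p-energy} inherits the same $d$-dependence through the underlying elliptic regularity bound, not merely the $H^1$ estimate whose dependence is transparent from the spectral expansion, so that the uniform-constant claim holds in both the $L^2$ and the $L^p$ settings.
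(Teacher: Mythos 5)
Your proof is correct and follows essentially the same route as the paper: the comparison (\ref{eigenvaluebound}) via the pinched Rayleigh quotient and the Courant--Rayleigh minimax principle, followed by the observation that (\ref{smallfr}) or (\ref{higherfr}) places $\omega^2$ in a spectral gap with $\mathrm{dist}(\omega^2,\Sigma_{c^{-2}})$ bounded below by $|\omega_0^2-\lambda_1/B_2|$ or $\min\bigl(|\omega_1^2-\lambda_n/B_1|,\,|\omega_2^2-\lambda_{n+1}/B_2|\bigr)$, which is then fed into the constant of Proposition~\ref{2-energy} (Proposition~\ref{p-energy}). Your version merely spells out the identification of the nearest spectral neighbors and the $W^{2,p}$ bookkeeping more explicitly than the paper does.
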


\begin{proof}
To derive estimate (\ref{eigenvaluebound}) we consider the Rayleigh
quotient related to equation (\ref{Helmholtz})
$$
\frac{\int_{\Omega}|\nabla v|^2}{\int_{\Omega}c^{-2}v^2} .
$$
By assumption (\ref{Aprioribound}), for any non trivial $v\in
H^1_0(\Omega)$,
$$
\frac{1}{B_2}\frac{\int_{\Omega}|\nabla v|^2}{\int_{\Omega}v^2}\leq
\frac{\int_{\Omega}|\nabla v|^2}{\int_{\Omega}c^{-2}v^2}\leq
\frac{1}{B_1}\frac{\int_{\Omega}|\nabla v|^2}{\int_{\Omega}v^2} ,
$$
which, by the Courant-Rayleigh minimax principle, immediately gives
$$
\frac{\lambda_n}{B_2}\leq \tilde{\lambda}_n\leq \frac{\lambda_n}{B_1},\quad \forall n\in\mathbb{N}.
$$
Hence, we have well-posedness of problem (\ref{Helmholtz}) if select
an $\omega^2$ satisfying (\ref{smallfr}) or (\ref{higherfr}), and since
$d(\omega^2,\Sigma_{c^{-2}})\geq |\omega_0^2-\frac{\lambda_1}{B_2}|$
or $d(\omega^2,\sigma_{c^{-2}})\geq \min
(|\omega_1^2-\frac{\lambda_n}{B_1}|,|\omega_2^2-\frac{\lambda_{n+1}}{B_2}|)$
the claim follows.
\end{proof}

\medskip\medskip

We observe that in order to derive the uniform estimates in
Proposition \ref{uniformconstants} below, we need to assume either
that the frequency is sufficiently small or that the oscillation of
$c^{-2}$ is sufficiently small. This clearly depends on the fact that
we compare the eigenvalues of our equation with those of the
Laplacian. In the applications we have in mind we aim to recover also
wavespeeds with high contrasts at not too small frequencies. For these
purposes, the following local result will be relevant.

\medskip\medskip

\begin{proposition}\label{continuity}
Let $\Omega$ and $c_0^{-2}$ satisfy the assumptions of Proposition
\ref{2-energy} (\ref{p-energy}) and let
$\omega^2\in\mathbb{C}\backslash\Sigma_{c_0^{-2}}$ where
$\Sigma_{c_0^{-2}}$ is the Dirichlet spectrum of equation
(\ref{Helmholtz}) for $c^{-2}=c_0^{-2}$. There exists $\delta>0$ such
that, if
\[
\|c^{-2}-c_0^{-2}\|_{L^{\infty}(\Omega)}\leq \delta,
\]
then $\omega^2\in \mathbb{C}\backslash\Sigma_{c^{2}}$ and the
estimates (\ref{energy_est1}) ((\ref{energy_estp})) hold, the constant
$C=C( B_2, \omega^2, \Omega, d_0)$ where $d_0:=\textrm{dist}(\omega^2,
\Sigma_{c_0^{-2}})$ and $C$ blows up as $d_0\rightarrow 0$.
\end{proposition}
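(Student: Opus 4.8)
The plan is to recast well-posedness of (\ref{Helmholtz}) as the invertibility of a perturbed operator, exactly as in the proof of Proposition~\ref{2-energy}, and then to treat $c^{-2}$ as a small $L^\infty$ perturbation of $c_0^{-2}$ via a resolvent (Neumann series) argument. Writing $K_0 = (-\Delta)^{-1}M_{c_0^{-2}}$ and $K = (-\Delta)^{-1}M_{c^{-2}}$ as compact operators on $L^2(\Omega)$, solvability of (\ref{Helmholtz1}) with $g=0$ is equivalent to invertibility of $I-\omega^2 K$, and $\omega^2\in\mathbb{C}\backslash\Sigma_{c^{-2}}$ precisely when $I-\omega^2 K$ is invertible. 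Since $\omega^2\notin\Sigma_{c_0^{-2}}$, the operator $I-\omega^2 K_0$ is invertible; diagonalizing $K_0$ in its eigenbasis $\{\alpha_n^{(0)}\}$, $\alpha_n^{(0)}=1/\tilde\lambda_n^{(0)}$, and arguing exactly as for (\ref{est_u}) gives the resolvent bound $\|(I-\omega^2 K_0)^{-1}\|_{L^2\to L^2}\le 1+|\omega^2|/d_0$, which depends only on $\omega^2$ and $d_0=\mathrm{dist}(\omega^2,\Sigma_{c_0^{-2}})$.

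Next I would quantify the perturbation. Since $(K-K_0)u=(-\Delta)^{-1}M_{c^{-2}-c_0^{-2}}u$ and $(-\Delta)^{-1}\colon L^2(\Omega)\to H^1_0(\Omega)\hookrightarrow L^2(\Omega)$ is bounded, one obtains $\|(K-K_0)u\|_{L^2(\Omega)}\le C(\Omega)\|(c^{-2}-c_0^{-2})u\|_{L^2(\Omega)}\le C(\Omega)\|c^{-2}-c_0^{-2}\|_{L^\infty(\Omega)}\|u\|_{L^2(\Omega)}$, hence $\|K-K_0\|_{L^2\to L^2}\le C(\Omega)\,\delta$. Writing
\[
I-\omega^2 K=(I-\omega^2 K_0)\big[\,I-\omega^2(I-\omega^2 K_0)^{-1}(K-K_0)\,\big],
\]
the bracketed factor is invertible by a Neumann series as soon as $|\omega^2|\,(1+|\omega^2|/d_0)\,C(\Omega)\,\delta<1$. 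I would therefore choose $\delta$ so small, depending only on $\omega^2$, $d_0$ and $\Omega$, that $|\omega^2|\,(1+|\omega^2|/d_0)\,C(\Omega)\,\delta\le\tfrac{1}{2}$; then $I-\omega^2 K$ is invertible, so $\omega^2\in\mathbb{C}\backslash\Sigma_{c^{-2}}$, and $\|(I-\omega^2 K)^{-1}\|_{L^2\to L^2}\le 2\,(1+|\omega^2|/d_0)$.

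With the uniform resolvent bound in hand, the energy estimates follow verbatim from the arguments in Proposition~\ref{2-energy}: I would apply $(I-\omega^2 K)^{-1}$ to $h=(-\Delta)^{-1}f$ to bound $\|u\|_{L^2(\Omega)}$, recover the gradient bound by multiplying (\ref{Helmholtz1}) by $u$, integrating by parts and using Assumption~\ref{Aprioribound}, and finally reduce the case $g\neq 0$ to $g=0$ via a lift $\tilde g\in H^1(\Omega)$ with $\tilde g=g$ on $\partial\Omega$ and $\|\tilde g\|_{H^1(\Omega)}\le\|g\|_{H^{1/2}(\partial\Omega)}$. This yields (\ref{energy_est1}) with $C=C(B_2,\omega^2,\Omega,d_0)$ blowing up as $d_0\to 0$ through the factor $1+|\omega^2|/d_0$. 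For the $W^{2,p}$ statement, the $L^2$ invertibility of $I-\omega^2 K$ already certifies $\omega^2\notin\Sigma_{c^{-2}}$ (the Dirichlet spectrum being independent of the functional setting by elliptic regularity of the eigenfunctions), and (\ref{energy_estp}) then follows from the standard $W^{2,p}$ estimate as in Proposition~\ref{p-energy}, with the lower-order term $\omega^2 c^{-2}u$ controlled by the already-established $L^2$ bound.

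The essential point, and the only place requiring real care, is that the resolvent bound must be expressed through $d_0$, the distance to the \emph{unperturbed} spectrum $\Sigma_{c_0^{-2}}$, rather than through the a priori uncontrolled distance to $\Sigma_{c^{-2}}$; the factorization above is exactly what transfers the bound from $K_0$ to $K$. A secondary technical point is the self-adjointness used to diagonalize $K_0$: it holds with respect to the weighted inner product $\int_\Omega c_0^{-2}\,u\,\bar v$, under which the generalized eigenvalue problem $-\Delta e_n=\tilde\lambda_n^{(0)}c_0^{-2}e_n$ is self-adjoint with real positive eigenvalues, and this suffices for the spectral estimate on $(I-\omega^2 K_0)^{-1}$.
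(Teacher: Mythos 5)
Your proposal is correct and is essentially the paper's own argument: the paper also treats $\omega^2\delta c$ (with $\delta c = c^{-2}-c_0^{-2}$) as a small perturbation of the invertible operator $L=-\Delta-\omega^2 c_0^{-2}$, rewrites the problem as $(I-K)v=h$ with $K=L^{-1}(\omega^2 M_{\delta c})$ for the correction $v=u-u_0$, and inverts by a Neumann series after choosing $\delta=\frac{1}{2}\bigl(B_2\omega^2(1+\omega^2/d_0)\bigr)^{-1}$, so that the constant is expressed through $d_0$ exactly as you insist. Your operator factorization $I-\omega^2K=(I-\omega^2K_0)\bigl[I-\omega^2(I-\omega^2K_0)^{-1}(K-K_0)\bigr]$ produces literally the same bracketed operator $I-\omega^2L^{-1}M_{\delta c}$, so the two write-ups differ only in packaging (plus your extra care with the weighted self-adjointness and the $W^{2,p}$ case, which the paper omits by restricting itself to $H^1$).
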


\begin{proof}
We limit ourselves to prove the well-posedness in $H^1(\Omega)$. Let $\delta c:=c^{-2}-c_0^{-2}$ and consider  $u_0\in H^{1}(\Omega)$ the unique solution of \ref{Helmholtz1} for $c_0^{-2}$ and consider the problem
\begin{equation}\label{auxiliarypr}
\left\{
\begin{array}{rl}
-\Delta v-\omega^2c_0^{-2}v-\omega^2\delta c v= &\omega^2u_0\delta c \quad x\in \Omega, \\
v= & 0, \quad x\in \partial \Omega .
\end{array}
\right.
\end{equation}
Let $L :=-\Delta-\omega^2c_0^{-2}$. Then, by assumption, $L$ is
invertible from $H_0^1(\Omega)$ to $L^2(\Omega)$ and we can rewrite
problem (\ref{auxiliarypr}) in the form
\begin{equation}\label{auxiliaryeq}
(I-K)v=h ,
\end{equation}
where $K=L^{-1}M_{\delta c}$ where $M_{\delta c}$ is the
multiplication operator and $h=L^{-1}(\omega^2u_0\delta c)$. Observe
now that % \textbf{[Q2: $\omega^2$ to be removed]}
\[
\|K\|\leq \|L^{-1}\|\|M_{\delta c}\|\leq \|L^{-1}\| \delta
   \leq B_2 \omega^2(1+\frac{\omega^2}{d_0})\delta.
\]
Hence, choosing $\delta=\frac{1}{2}(
B_2\omega^2(1+\frac{\omega^2}{d_0}))^{-1}$, we have that there exists
a unique solution $v$ of \ref{auxiliaryeq} in $H^1_0$ satisfying
(\ref{energy_est1}) with $C=C (B_2, \omega^2, \Omega, d_0)$ and since
$u=u_0+v$ the statement follows.
\end{proof}

\medskip\medskip

We define the direct operator, $F_\omega$, as
\[
\begin{aligned}
  F_\omega : L^2(\Omega)  & \rightarrow && \mathcal{L}(H^{1/2}(\partial \Omega), H^{-1/2}(\partial \Omega)),
  \\
  c^{-2}(x) & \mapsto && \Lambda_{\omega^2 c^{-2}},
\end{aligned}
\]
and examine its properties in the following lemmas.
%Note that the data space is not reflexive; reflexivity would be of importance, for example, in Tikhonov regularization methods.

\medskip\medskip

\begin{lemma}[Fr\'echet differentiability]\label{Frechet-diff}
Assume that $\omega^2$ satisfies (\ref{smallfr}) or (\ref{higherfr}).
Then the direct operator $F_\omega$ is Fr\'echet differentiable at
$c^{-2}$.
\end{lemma}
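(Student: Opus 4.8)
The plan is to prove Fréchet differentiability by exhibiting the candidate derivative explicitly and then controlling the remainder.The plan is to identify the Fr\'echet derivative explicitly through the linearization of the Helmholtz problem and then estimate the second-order remainder. Fix $a := c^{-2}$ satisfying (\ref{smallfr}) or (\ref{higherfr}) and a perturbation $h \in L^2(\Omega)$ that is small in the $L^2$ norm; by Proposition~\ref{continuity} the operator $-\Delta - \omega^2(a+h)$ remains boundedly invertible from $H^1_0(\Omega)$ to its dual once $\|h\|_{L^2}$ is small enough, so that $F_\omega(a+h)$ is well defined. For boundary data $g \in H^{1/2}(\partial\Omega)$, let $u=u^g_a$ and $\tilde u = u^g_{a+h}$ denote the corresponding solutions of (\ref{Helmholtz}), and let $w=w^g$ solve the linearized problem $(-\Delta - \omega^2 a)w = \omega^2 h\,u$ in $\Omega$ with $w=0$ on $\partial\Omega$. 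The candidate derivative is $F_\omega'(a)[h]\,g := \partial_\nu w\mid_{\partial\Omega}$; a short integration by parts using $w\mid_{\partial\Omega}=0$ yields the familiar representation $\langle F_\omega'(a)[h]g,\psi\rangle = -\omega^2\int_\Omega h\,u^g_a\,u^\psi_a\,\dd x$.

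First I would check that $h \mapsto F_\omega'(a)[h]$ is a bounded linear map into $\mathcal{L}(H^{1/2}(\partial\Omega),H^{-1/2}(\partial\Omega))$. Linearity is immediate, and boundedness follows from the Sobolev embedding $H^1(\Omega)\hookrightarrow L^6(\Omega)$ valid in $\mathbb{R}^3$: the product $u^g_a\,u^\psi_a$ lies in $L^3(\Omega)$ with $\|u^g_a\,u^\psi_a\|_{L^3}\le C\|u^g_a\|_{H^1}\|u^\psi_a\|_{H^1}\le C\|g\|_{H^{1/2}}\|\psi\|_{H^{1/2}}$ by (\ref{energy_est1}), so that $|\langle F_\omega'(a)[h]g,\psi\rangle|\le C\|h\|_{L^2}\|g\|_{H^{1/2}}\|\psi\|_{H^{1/2}}$.

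The core of the argument is the remainder estimate. Setting $r := \tilde u - u - w$, a direct computation from the three defining equations shows that $r$ solves $(-\Delta - \omega^2(a+h))r = \omega^2 h\,w$ in $\Omega$ with $r=0$ on $\partial\Omega$, and that $(F_\omega(a+h)-F_\omega(a)-F_\omega'(a)[h])\,g = \partial_\nu r\mid_{\partial\Omega}$. To bound $r$ I again rely on the $L^6$ embedding: from the equation for $w$, $\|w\|_{H^1}\le C\|h\,u\|_{H^{-1}}\le C\|h\|_{L^2}\|u\|_{L^3}\le C\|h\|_{L^2}\|g\|_{H^{1/2}}$, since $h\,u\in L^{6/5}(\Omega)\hookrightarrow H^{-1}(\Omega)$; applying the same reasoning to the equation for $r$ gives $\|r\|_{H^1}\le C\|h\,w\|_{H^{-1}}\le C\|h\|_{L^2}\|w\|_{H^1}\le C\|h\|_{L^2}^2\|g\|_{H^{1/2}}$, with uniform constants furnished by Proposition~\ref{continuity}. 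Finally, defining the Neumann trace weakly through Green's identity and estimating $\Delta r$ in $H^{-1}$ by the same duality, I obtain $\|\partial_\nu r\|_{H^{-1/2}(\partial\Omega)}\le C(\|r\|_{H^1}+\|h\|_{L^2}\|w\|_{H^1})\le C\|h\|_{L^2}^2\|g\|_{H^{1/2}}$. Taking the supremum over $\|g\|_{H^{1/2}}=1$ then gives $\|F_\omega(a+h)-F_\omega(a)-F_\omega'(a)[h]\|_{\mathcal{L}(H^{1/2}(\partial\Omega),H^{-1/2}(\partial\Omega))}\le C\|h\|_{L^2}^2 = o(\|h\|_{L^2})$, which is the assertion.

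I expect the main obstacle to be the limited regularity of the perturbation $h\in L^2(\Omega)$: the products $h\,u$ and $h\,w$ do not lie in $L^2(\Omega)$, so the naive energy estimate is unavailable. The space dimension $n=3$ is essential here, as it is precisely the embedding $H^1\hookrightarrow L^6$ that places these products in $L^{6/5}\hookrightarrow H^{-1}$ and keeps every constant controlled. A secondary point requiring care is the uniform invertibility of $-\Delta-\omega^2(a+h)$ for small $h$, which is exactly what Proposition~\ref{continuity} supplies.
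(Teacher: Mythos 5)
Your overall architecture is sound and genuinely different from the paper's, but one step is wrongly justified: both times you need solvability of the perturbed problem and uniform resolvent bounds, you appeal to Proposition~\ref{continuity} under the hypothesis that $\|h\|_{L^2(\Omega)}$ is small. Proposition~\ref{continuity} requires $\|c^{-2}-c_0^{-2}\|_{L^\infty(\Omega)}\le\delta$; its proof bounds the multiplication operator $M_{\delta c}$ on $L^2(\Omega)$, which is impossible for a perturbation that is merely small in $L^2$ (such an $h$ may be arbitrarily large on a set of small measure). So the proposition you cite does not supply what you claim it does. The claim itself is true, however, and is provable with exactly the duality you use elsewhere: for $\phi,\chi\in H^1_0(\Omega)$ one has $\left|\int_\Omega h\phi\chi\,\dd x\right|\le \|h\|_{L^2}\|\phi\|_{L^3}\|\chi\|_{L^6}\le C\|h\|_{L^2}\|\phi\|_{H^1}\|\chi\|_{H^1}$, i.e.\ $\|M_h\|_{\mathcal{L}(H^1_0(\Omega),H^{-1}(\Omega))}\le C\|h\|_{L^2(\Omega)}$; writing $-\Delta-\omega^2(a+h)=\bigl(-\Delta-\omega^2 a\bigr)\bigl(I-\omega^2(-\Delta-\omega^2 a)^{-1}M_h\bigr)$ and summing the Neumann series gives bounded invertibility from $H^1_0(\Omega)$ to $H^{-1}(\Omega)$, uniformly over $\|h\|_{L^2}\le\epsilon_0$, provided $-\Delta-\omega^2 a$ itself is invertible in this duality (Proposition~\ref{2-energy}/Proposition~\ref{uniformconstants}, extended from $L^2$ to $H^{-1}$ data by the same Fredholm argument; this extension is also needed for your estimate of $w$). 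With this repair, your bounds $\|w\|_{H^1}\le C\|h\|_{L^2}\|g\|_{H^{1/2}}$, $\|r\|_{H^1}\le C\|h\|_{L^2}^2\|g\|_{H^{1/2}}$ and the weak-trace estimate go through as written.

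Once repaired, your route is genuinely different from the paper's. The paper never forms the linearized solution $w$ or the second-order remainder $r=\tilde u-u-w$: it works at the level of the bilinear form, using Alessandrini's identity (\ref{ident-1}) to write the remainder as $\omega^2\int_\Omega \delta c^{-2}(u-\tilde u)v\,\dd x$, with $v$ the unperturbed solution for the second boundary datum, and bounds $\|u-\tilde u\|_{L^4}$ via $W^{2,12/11}$ elliptic regularity (Proposition~\ref{p-energy}/Proposition~\ref{uniformconstants}, which requires a $C^{1,1}$ domain) followed by Sobolev embedding and $L^p$ interpolation. Your argument stays entirely in the variational $H^1_0$--$H^{-1}$ setting and needs only $H^1(\Omega)\hookrightarrow L^6(\Omega)$: it is more elementary, works on Lipschitz domains, and yields the quadratic remainder bound for all $L^2$-small perturbations, whereas the paper takes $\delta c^{-2}\in L^\infty(\Omega)$ and thus, strictly speaking, establishes the estimate only along essentially bounded directions. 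What the paper's approach buys is, first, explicit frequency dependence of the constants (the factors $\omega^2$ and $\omega^4$), which Lemmas~\ref{lemma:L-hat} and~\ref{lemma:L} extract and the multi-level analysis later consumes, and, second, by testing against $v$ rather than taking the Neumann trace of $r$, it sidesteps the technicality of defining $\partial_\nu r$ weakly when $\Delta r$ lies only in $L^{3/2}(\Omega)$. (Your sign in the representation of the derivative is opposite to that in (\ref{Fre-DF}); this is purely a matter of pairing and normal conventions and is immaterial to the result.)
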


\begin{proof}
We start from Alessandrini's identity,
\begin{equation}\label{Alessandrini-iden}
    \int_{\Omega} \omega^2(c_1^{-2} -c_2^{-2}) u_1 u_2 \, \dd x =
    \langle (\Lambda_{\omega^2 c_1^{-2}} - \Lambda_{\omega^2
      c_2^{-2}}) u_1 |_{\partial\Omega} \, ,
                \, u_2 |_{\partial\Omega} \rangle ,
\end{equation}
where $\langle \cdot, \cdot \rangle$ is the dual pairing with respect
to $H^{-1/2}(\partial \Omega)$ and $H^{1/2}(\partial \Omega)$ and
$u_1$ and $u_2$ are the solutions of the Helmholtz equation with
Dirichlet boundary condition and coefficient $c_1$ and $c_2$,
respectively. Let $\delta c^{-2} \in L^\infty(\Omega)$. We observe,
while substituting $c^{-2}$ and $c^{-2} + \delta c^{-2}$ for
$c_1^{-2}$ and $c_2^{-2}$, that
\begin{equation}\label{ident-1}
\langle (\Lambda_{\omega^2(c^{-2} + {\delta c}^{-2})} - \Lambda_{\omega^2 c^{-2}})g \, , \, h \rangle = \omega^2 \int_\Omega {\delta c}^{-2} \, u v \,\dd x ,
\end{equation}
where $u$ and $v$ solve the boundary value problems,
\[
\left\{
\begin{array}{rl}
(-\Delta - \omega^2(c^{-2} + {\delta c}^{-2}))u = & 0, \quad x\in \Omega, \\
u= & g, \quad x\in \partial \Omega,
\end{array}
\right.
\]
and
\[
\left\{
\begin{array}{rl}
(-\Delta - \omega^2 c^{-2})v = & 0, \quad x\in \Omega, \\
v = & h, \quad x\in \partial \Omega,
\end{array}
\right.
\]
respectively. We show that
\begin{equation}\label{Fre-DF}
\langle DF_\omega(c^{-2})({\delta c}^{-2})  g\, , \, h\rangle = \omega^2 \int_\Omega \delta c^{-2} \, \tilde{u} v \,\dd x,
\end{equation}
where $\tilde{u}$ solves the equation
\[
\left\{
\begin{array}{rl}
(-\Delta - \omega^2 c^{-2})\tilde{u} = & 0, \quad x\in \Omega, \\
\tilde{u} = & g, \quad x\in \partial \Omega.
\end{array}
\right.
\]
In fact, by (\ref{ident-1}), we have that
\begin{equation}\label{break-diff-lp}
  \langle (\Lambda_{\omega^2(c^{-2} + {\delta c}^{-2})} - \Lambda_{\omega^2 c^{-2}})g \, , \, h \rangle -  \omega^2\int_\Omega {\delta c}^{-2} \, \tilde{u} v \,\dd x
   = \omega^2 \int_\Omega {\delta c}^{-2} \, (u - \tilde{u}) v \,\dd x.
   %
%   \\
%   \le \,\, & C \omega^4 \|{\delta c}^{-2}\|_{L^\infty(\Omega)}^2 \|g\|_{H^{1/2}(\partial\Omega)} \|h\|_{H^{1/2}(\partial\Omega)}.
\end{equation}
By using the H\"older inequality twice and the Sobolev embedding theorem, we obtain that
\begin{equation}\label{ineq:1}
 \left|\omega^2 \int_\Omega {\delta c}^{-2} \, (u - \tilde{u}) v \,\dd x \right|
  \leq \,\, \omega^2 \|{\delta c}^{-2}\|_{L^{2}(\Omega)} \|u - \tilde{u}\|_{L^{4}(\Omega)} \|v\|_{L^{4}(\Omega)}.
\end{equation}
We note that $u-\tilde{u}$ solves the equations
\[
\left\{
\begin{array}{rll}
(-\Delta - \omega^2 c^{-2})(u - \tilde{u}) = & -\omega^2 {\delta c}^{-2} \,  u , & \quad x\in \Omega, \\
u - \tilde{u} = & 0, & \quad x\in \partial \Omega.
\end{array}
\right.
\]
Therefore, by the Sobolev embedding theorem and
Proposition~\ref{uniformconstants}, we find that
\begin{equation}\label{ineq-diff-1}
  \|u - \tilde{u}\|_{L^{4}(\Omega)}
\leq  C \|u - \tilde{u}\|_{W^{2,\frac{12}{11}}(\Omega)}
\leq C \omega^2 \|{\delta c}^{-2} \, u\|_{L^{\frac{12}{11}}(\Omega)} \le C \omega^2 \|{\delta c}^{-2}\|_{L^{2}(\Omega)} \|u\|_{L^{\frac{12}{5}}(\Omega)}.
\end{equation}
The right-most inequality is
obtained by applying the H\"older inequality to $\int_\Omega |{\delta c}^{-2} u|^{12/11} \dd x$ with indexes $11/6$ and $11/5$.
By using the interpolation of $L^p$ spaces,
\[
\|u\|_{L^{p_\theta}(\Omega)} \leq \|u\|_{L^{2}(\Omega)}^{1-\theta} \, \|u\|_{L^{6}(\Omega)}^{\theta} , \quad \forall \theta\in [0,1] ;
\]
with $p_\theta$ defined by $\frac{1}{p_\theta} = \frac{1}{2}
(1-\theta) + \frac{1}{6} \theta$, we conclude that
\begin{equation}\label{ineq-diff}
  \|u - \tilde{u}\|_{L^{4}(\Omega)} \leq C \omega^2 \|{\delta c}^{-2}\|_{L^{2}(\Omega)} \|u\|_{L^{2}(\Omega)}^{3/4} \, \|u\|_{L^{6}(\Omega)}^{1/4} .
\end{equation}

Then we estimate the left-hand side of \eqref{break-diff-lp} using
inequalities \eqref{ineq-diff}, \eqref{ineq:1} and the
interpolation of $L^4$ space,
\begin{equation}\label{ineq-diff-4}
\begin{aligned}
& \hspace*{-1.5cm}
\left| \langle (\Lambda_{\omega^2(c^{-2} + {\delta c}^{-2})} - \Lambda_{\omega^2 c^{-2}})g \, , \, h \rangle -  \omega^2\int_\Omega {\delta c}^{-2} \, \tilde{u} v \,\dd x \right|
=  \,\, \left|\omega^2 \int_\Omega {\delta c}^{-2} \, (u - \tilde{u}) v \, \dd x \right|
\\
\leq & \,\,  \omega^2 \|{\delta c}^{-2}\|_{L^2(\Omega)} \|u - \tilde{u}\|_{L^4(\Omega)} \|v\|_{L^4(\Omega)}
\\[0.1cm]
\leq & \,\,  C \omega^4 \|{\delta c}^{-2}\|_{L^2(\Omega)}^2 \|u\|_{L^{2}(\Omega)}^{3/4} \|u\|_{L^{6}(\Omega)}^{1/4}
\|v\|_{L^{2}(\Omega)}^{3/4} \|v\|_{L^{6}(\Omega)}^{1/4}
\end{aligned}
\end{equation}
for some constant $C$. Next, by applying the Sobolev embedding,
$H^1(\Omega) \subset L^6(\Omega)$, and
Proposition~\ref{uniformconstants}, we arrive at the following
inequality,
\[
 \|u\|_{L^{2}(\Omega)}^{3/4} \|u\|_{L^{6}(\Omega)}^{1/4}
\leq C \|u\|_{L^{2}(\Omega)}^{3/4} \|u\|_{H^{1}(\Omega)}^{1/4}
\leq  C \|u\|_{H^{1/2}(\partial\Omega)}.
\]
This, with \eqref{ineq-diff-4} and the same procedure for $v$, leads
to the Fr\'echet differentiability of $F_\omega$ at $c^{-2}$.
\end{proof}

\medskip\medskip

\begin{lemma}\label{lemma:L-hat}
Let $c^{-2} \in L^2(\Omega)$ satisfy Assumption
\ref{Aprioribound}. Then, if (\ref{smallfr}) ((\ref{higherfr})) holds,
there exists a constant $\hat{\mathfrak{L}}_0$, which depends on
$(\Omega, B_2,\lambda_1,\omega^2_0)$
$((\Omega,B_1. B_2,\omega^2_1,\omega^2_2,\Sigma))$ such that
\begin{equation}\label{L-bd}
\| DF_\omega(c^{-2})\|_{\mathcal{L}(L^2(\Omega), \mathcal{L}(H^{1/2}(\partial\Omega), H^{-1/2}(\partial\Omega)))} \le \hat{\mathfrak{L}}_0 \omega^2 ;
\end{equation}
$\hat{\mathfrak{L}}_0$ blows up as
$|\omega_0^2-\frac{\lambda_1}{B_2}|\rightarrow 0$ $(\min
(|\omega_1^2-\frac{\lambda_n}{B_1}|,|\omega_2^2-\frac{\lambda_{n+1}}{B_2}|)
\rightarrow 0)$.
\end{lemma}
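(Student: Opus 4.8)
The plan is to read the operator norm off the explicit representation of the Fr\'echet derivative already obtained in the proof of Lemma~\ref{Frechet-diff}. By \eqref{Fre-DF}, for every $\delta c^{-2}\in L^2(\Omega)$ and boundary data $g,h\in H^{1/2}(\partial\Omega)$,
\[
\langle DF_\omega(c^{-2})(\delta c^{-2})\,g\,,\,h\rangle = \omega^2\int_\Omega \delta c^{-2}\,\tilde u\,v\,\dd x ,
\]
where $\tilde u$ and $v$ solve the homogeneous Helmholtz problems with coefficient $c^{-2}$ and boundary data $g$ and $h$, respectively. Since this expression is linear in $\delta c^{-2}$ and the factor $\omega^2$ is explicit, the bound \eqref{L-bd} reduces to a single trilinear estimate on this integral, and no new PDE analysis is required beyond what was developed for Lemma~\ref{Frechet-diff}.

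First I would apply the H\"older inequality with exponents $2,4,4$ to get
\[
\Big|\,\omega^2\int_\Omega \delta c^{-2}\,\tilde u\,v\,\dd x\,\Big| \le \omega^2\,\|\delta c^{-2}\|_{L^2(\Omega)}\,\|\tilde u\|_{L^4(\Omega)}\,\|v\|_{L^4(\Omega)} .
\]
Next I would control the two $L^4$ factors by the boundary data. On the bounded domain $\Omega$ the embedding $L^6(\Omega)\subset L^4(\Omega)$ together with the Sobolev embedding $H^1(\Omega)\subset L^6(\Omega)$ gives $\|\tilde u\|_{L^4(\Omega)}\le C\|\tilde u\|_{H^1(\Omega)}$, and the homogeneous energy estimate of Proposition~\ref{uniformconstants} (the case $f=0$ of \eqref{energy_est1}) bounds $\|\tilde u\|_{H^1(\Omega)}\le C\|g\|_{H^{1/2}(\partial\Omega)}$; hence $\|\tilde u\|_{L^4(\Omega)}\le C\|g\|_{H^{1/2}(\partial\Omega)}$ and, identically, $\|v\|_{L^4(\Omega)}\le C\|h\|_{H^{1/2}(\partial\Omega)}$. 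These are exactly the Sobolev-embedding and trace estimates used at the end of the proof of Lemma~\ref{Frechet-diff}, so they may be quoted rather than rederived.

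Combining the two steps yields
\[
\big|\langle DF_\omega(c^{-2})(\delta c^{-2})\,g\,,\,h\rangle\big| \le C\,\omega^2\,\|\delta c^{-2}\|_{L^2(\Omega)}\,\|g\|_{H^{1/2}(\partial\Omega)}\,\|h\|_{H^{1/2}(\partial\Omega)} .
\]
Taking the supremum over $h$ of unit $H^{1/2}(\partial\Omega)$ norm bounds the $H^{-1/2}(\partial\Omega)$ norm of the image, a further supremum over $g$ of unit norm bounds the $\mathcal{L}(H^{1/2},H^{-1/2})$ norm of $DF_\omega(c^{-2})(\delta c^{-2})$ by $C\omega^2\|\delta c^{-2}\|_{L^2(\Omega)}$, and a final supremum over $\delta c^{-2}$ of unit $L^2$ norm gives \eqref{L-bd} with $\hat{\mathfrak{L}}_0=C$.

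I expect no genuine analytic obstacle here; the one point requiring care is the bookkeeping of $C$. The Sobolev and domain-embedding constants depend only on $\Omega$, while the energy constant imported from Proposition~\ref{uniformconstants} carries the dependence on the a-priori and known parameters $(\Omega,B_2,\lambda_1,\omega_0^2)$ under \eqref{smallfr} (respectively $(\Omega,B_1,B_2,\omega_1^2,\omega_2^2,\Sigma)$ under \eqref{higherfr}), and it is precisely this constant that is \emph{independent of} $c^{-2}$ beyond the bounds $B_1,B_2$. Its blow-up as $|\omega_0^2-\tfrac{\lambda_1}{B_2}|\to 0$ (respectively as $\min(|\omega_1^2-\tfrac{\lambda_n}{B_1}|,|\omega_2^2-\tfrac{\lambda_{n+1}}{B_2}|)\to 0$) is inherited directly by $\hat{\mathfrak{L}}_0$, giving the asserted behavior near the Dirichlet spectrum.
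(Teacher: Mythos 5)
Your proof is correct and follows essentially the same route as the paper's: the identical H\"older $(2,4,4)$ estimate, the embedding $H^1(\Omega)\subset L^6(\Omega)\subset L^4(\Omega)$, and the uniform energy bound of Proposition~\ref{uniformconstants} carrying the stated parameter dependence and blow-up near the Dirichlet spectrum. The only difference is one of packaging: the paper applies these estimates to the finite difference $\Lambda_{\omega^2 c_1^{-2}}-\Lambda_{\omega^2 c_2^{-2}}$ via Alessandrini's identity \eqref{Alessandrini-iden} and lets the derivative bound ``follow,'' whereas you apply them directly to the derivative representation \eqref{Fre-DF}, which is if anything slightly cleaner since it avoids the implicit passage from a Lipschitz bound on $F_\omega$ to a bound on $DF_\omega$.
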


\begin{proof}
We start from Alessandrini's identity \eqref{Alessandrini-iden}. By
applying the H\"older inequality twice, we find that
% \textbf{[Q: if $c_{1,2}^{-2}$ are in a particular class, like
%     multiscale basis functions, the estimate is more subtle; one can
%     write the middle expression as an inner product in the Fourier
%     domain (combining $u_1 u_2$) whence the estimate involves the
%     ($\omega$-dependent) ``probing'' of the amplitude spectrum of
%     $c_{1,2}^{-2}$ -- after Remark 3.7?]}
\begin{multline*}
   | \langle (\Lambda_{\omega^2 c_1^{-2}}
     - \Lambda_{\omega^2 c_2^{-2}}) \, u_1 |_{\partial\Omega} \, , \,
                u_2 |_{\partial\Omega} \rangle |
   = \left| \int_{\Omega} \omega^2 (c_1^{-2} -c_2^{-2})
                          u_1 u_2 \, \dd x \right|
\\
  \le \omega^2 \| c_1^{-2} - c_2^{-2} \|_{L^{2}(\Omega)}
               \|u_1\|_{L^{4}(\Omega)} \|u_2\|_{L^{4}(\Omega)} .
\end{multline*}
Similarly as in the proof of Lemma~\ref{lemma:L}, by the interpolation
of $L^4$ space, the Sobolev embedding theorem
%\comment{More details, which embedding theorem?}
and Proposition~\ref{uniformconstants}, we obtain
\[
   \|u_i\|_{L^{4}(\Omega)} \le C \| u_i \|_{H^1(\Omega)}
        \le C \|u_i\|_{H^{1/2}(\partial \Omega)} , \quad i=1,2 ,
\]
where $C$ is a generic constant. Hence,
\[
  \|\Lambda_{\omega^2 c_1^{-2}} - \Lambda_{\omega^2 c_2^{-2}}\|_{\mathcal{L}(H^{1/2}(\partial\Omega), H^{-1/2}(\partial\Omega))} \le \hat{\mathfrak{L}}_0 \omega^2 \|c_1^{-2} -c_2^{-2}\|_{L^{2}(\Omega)},
\]
with $\hat{\mathfrak{L}}_0 = C \norm{u_1}_{L^{4}(\Omega)}$,
from which (\ref{L-bd}) follows.
\end{proof}

\medskip\medskip

\begin{lemma}\label{lemma:L}
Let $c_1^{-2}, c_2^{-2} \in L^2\Omega)$ satisfy Assumption
\ref{Aprioribound}, Then, if (\ref{smallfr}) ((\ref{higherfr})) holds
there exists a positive constant $\mathfrak{L}_0$, such that
\[
 \|DF_\omega(c_1^{-2}) - DF_\omega(c_2^{-2})\|_{\mathcal{L}(L^2(\Omega),\mathcal{L}(H^{1/2}(\partial\Omega), H^{-1/2}(\partial\Omega)))} \le \mathfrak{L}_0 \omega^4 \|c_1^{-2} - c_2^{-2}\|_{L^{2}(\Omega)},
 \]
where $\mathfrak{L}_0=\mathfrak{L}_0(\Omega, B_2,\lambda_1,\omega^2_0)$ $(\mathfrak{L}_0=\mathfrak{L}_0(\Omega,B_1. B_2,\omega^2_1,\omega^2_2,\Sigma))$ and $\mathfrak{L}_0$ blows up as $|\omega_0^2-\frac{\lambda_1}{B_2}|\rightarrow 0$
 $(\min (|\omega_1^2-\frac{\lambda_n}{B_1}|,|\omega_2^2-\frac{\lambda_{n+1}}{B_2}|) \rightarrow 0)$.
\end{lemma}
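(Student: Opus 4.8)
The plan is to exploit the explicit representation of the Fr\'echet derivative established in Lemma~\ref{Frechet-diff}. Applying formula (\ref{Fre-DF}) at the two base points $c_1^{-2}$ and $c_2^{-2}$ and subtracting, I would write, for an arbitrary direction ${\delta c}^{-2} \in L^2(\Omega)$ and boundary data $g, h \in H^{1/2}(\partial\Omega)$,
\[
\langle (DF_\omega(c_1^{-2}) - DF_\omega(c_2^{-2}))({\delta c}^{-2}) g, h\rangle = \omega^2 \int_\Omega {\delta c}^{-2} (u_1 v_1 - u_2 v_2)\,\dd x,
\]
where $u_i$ and $v_i$ solve the Helmholtz equation with coefficient $c_i^{-2}$ and Dirichlet data $g$ and $h$, respectively. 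The elementary factorization $u_1 v_1 - u_2 v_2 = u_1(v_1 - v_2) + (u_1 - u_2)v_2$ then reduces the estimate to controlling the two solution differences $u_1 - u_2$ and $v_1 - v_2$.

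Next I would apply the H\"older inequality twice, exactly as in (\ref{ineq:1}), to obtain
\[
\left|\langle (DF_\omega(c_1^{-2}) - DF_\omega(c_2^{-2}))({\delta c}^{-2}) g, h\rangle\right| \le \omega^2 \|{\delta c}^{-2}\|_{L^2(\Omega)}\left(\|u_1\|_{L^4(\Omega)}\|v_1 - v_2\|_{L^4(\Omega)} + \|u_1 - u_2\|_{L^4(\Omega)}\|v_2\|_{L^4(\Omega)}\right).
\]
The crucial observation is that the differences solve inhomogeneous problems with zero boundary data: subtracting the two equations gives $(-\Delta - \omega^2 c_1^{-2})(u_1 - u_2) = \omega^2(c_1^{-2} - c_2^{-2})u_2$ in $\Omega$, and analogously for $v_1 - v_2$. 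This is precisely the structure treated in Lemma~\ref{Frechet-diff} for $u - \tilde u$, so the same chain of $W^{2,12/11}$ regularity (Proposition~\ref{uniformconstants}), Sobolev embedding, and $L^p$ interpolation yields
\[
\|u_1 - u_2\|_{L^4(\Omega)} \le C\omega^2\|c_1^{-2} - c_2^{-2}\|_{L^2(\Omega)}\|u_2\|_{L^2(\Omega)}^{3/4}\|u_2\|_{L^6(\Omega)}^{1/4},
\]
together with the companion bound for $v_1 - v_2$ in terms of $\|v_2\|$.

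Finally I would absorb the remaining solution norms. By the Sobolev embedding $H^1(\Omega) \subset L^6(\Omega)$ and the uniform energy estimate of Proposition~\ref{uniformconstants}, each of $\|u_1\|_{L^4(\Omega)}$ and $\|u_2\|_{L^2(\Omega)}^{3/4}\|u_2\|_{L^6(\Omega)}^{1/4}$ is bounded by $C\|g\|_{H^{1/2}(\partial\Omega)}$, and likewise the $v$-factors by $C\|h\|_{H^{1/2}(\partial\Omega)}$. Collecting the powers of $\omega$---one from the prefactor and one from each difference estimate---produces the factor $\omega^4\|c_1^{-2} - c_2^{-2}\|_{L^2(\Omega)}$, and taking the supremum over unit-norm ${\delta c}^{-2}$, $g$, and $h$ gives the claimed operator-norm bound with $\mathfrak{L}_0$ a generic constant. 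The parameter dependence of $\mathfrak{L}_0$ and its blow-up near the Dirichlet spectrum are inherited verbatim from the constant $C$ in Proposition~\ref{uniformconstants}. The only genuine subtlety, and the step I expect to require the most care, is verifying that the regularity estimate for $u_1 - u_2$ holds with a constant depending only on the a priori parameters rather than on the individual coefficients; this is guaranteed precisely because conditions (\ref{smallfr})/(\ref{higherfr}), via the eigenvalue sandwiching (\ref{eigenvaluebound}), keep $\omega^2$ uniformly away from both spectra $\Sigma_{c_1^{-2}}$ and $\Sigma_{c_2^{-2}}$.
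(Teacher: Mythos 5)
Your proposal is correct and follows essentially the same route as the paper's proof: the representation \eqref{Fre-DF} applied at both base points, the bilinear factorization of $u_1v_1 - u_2v_2$ into solution differences, the double H\"older estimate, the zero-boundary-data inhomogeneous problems for $u_1-u_2$ and $v_1-v_2$ handled by the $W^{2,12/11}$ regularity, Sobolev embedding and interpolation chain from Lemma~\ref{Frechet-diff}, and the uniform constants from Proposition~\ref{uniformconstants}. The only differences are cosmetic (you split the product as $u_1(v_1-v_2) + (u_1-u_2)v_2$ rather than $(u_1-u_2)v_1 + u_2(v_1-v_2)$, and your boundary-data labeling is self-consistent where the paper has a minor $g$/$h$ swap).
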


\begin{proof}
Let $g, h \in H^{1/2}(\Omega)$ and $u_i, v_i,\ i=1,2$, solve the
boundary value problems,
\[
\left\{
\begin{array}{rl}
(-\Delta - \omega^2 c_i^{-2})u_i = & 0, \quad x\in \Omega, \\
u_i = & h, \quad x\in \partial \Omega ,
\end{array}
\right.
\]
\[
\left\{
\begin{array}{rl}
(-\Delta - \omega^2 c_i^{-2})v_i = & 0, \quad x\in \Omega, \\
v_i = & g, \quad x\in \partial \Omega ,
\end{array}
\right.
\]
respectively. By using identity \eqref{Fre-DF} and applying the
H\"older inequality twice, we have
\begin{equation}
\begin{aligned}
   & \hspace*{-1.5cm}
 |\langle (DF_\omega(c_1^{-2})({\delta c}^{-2}) - DF_\omega(c_2^{-2})({\delta c}^{-2})) g \, , \, h\rangle|
\\
   = \,\, & \left| \omega^2 \int_{\Omega} {\delta c}^{-2} \, (u_1v_1 - u_2 v_2) \, \dd x \right|
\\
\le \,\, & \omega^2 \|{\delta c}^{-2}\|_{L^{2}(\Omega)}(\|u_1 - u_2\|_{L^{4}(\Omega)}\|v_1\|_{L^{4}(\Omega)} + \|u_2\|_{L^{4}(\Omega)}\|v_1 - v_2\|_{L^{4}(\Omega)}).
\end{aligned}
\end{equation}
We note that $u_1-u_2$ solves the equations
\[
\left\{
\begin{array}{rll}
(-\Delta - \omega^2 c_1^{-2})(u_1 - u_2) = & \omega^2 (c_1^{-2} - c_2^{-2}) \,  u_2 , & \quad x\in \Omega, \\
u_1 - u_2 = & 0, & \quad x\in \partial \Omega.
\end{array}
\right.
\]
Using an argument similar to the one in the proof of Lemma~\ref{Frechet-diff}, we derive that
\begin{equation}
\|u_1 - u_2\|_{L^{4}(\Omega)} \le C \omega^2 \|c_1^{-2} -c_2^{-2}\|_{L^{2}(\Omega)} \|u_2\|_{L^{4}(\Omega)} \le C \omega^2 \|c_1^{-2} -c_2^{-2}\|_{L^{2}(\Omega)} \|g\|_{H^{1/2}(\partial\Omega)}
\end{equation}
and, analogously,
\begin{equation}
\|v_1 - v_2\|_{L^{4}(\Omega)} \le C \omega^2 \|c_1^{-2} -c_2^{-2}\|_{L^{2}(\Omega)} \|v_2\|_{L^{4}(\Omega)} \le C \omega^2 \|c_1^{-2} -c_2^{-2}\|_{L^{2}(\Omega)} \|h\|_{H^{1/2}(\partial\Omega)}.
\end{equation}
Hence
\begin{multline}
|\langle (DF_\omega(c_1^{-2})({\delta c}^{-2}) - DF_\omega(c_2^{-2})({\delta c}^{-2})) g \, , \, h\rangle|
\\
\le C \omega^4 \|{\delta c}^{-2}\|_{L^{2}(\Omega)} \|c_1^{-2} -c_2^{-2}\|_{L^{2}(\Omega)} \|g\|_{H^{1/2}(\partial\Omega)} \|h\|_{H^{1/2}(\partial\Omega)},
\end{multline}
which gives that
\begin{equation}
\|DF_\omega(c_1^{-2}) - DF_\omega(c_2^{-2})\|_{\mathcal{L}(L^{2}(\Omega), \mathcal{L}(H^{1/2}(\partial\Omega), H^{-1/2}(\partial\Omega)))}
 \le C \omega^4 \|c_1^{-2} - c_2^{-2}\|_{L^{2}(\Omega)}.
\end{equation}
\end{proof}

\begin{remark}
In the above lemmas, we analyze the properties of the Fr\'echet
derivative. For simplicity, we use $L^2$. With
Assumption~\ref{Aprioribound}, we enforce that $c^{-2}$ belongs to
$L^\infty(\Omega)$. For $L^p$ optimization, $p > 3/2$,
Lemma~\ref{Frechet-diff}, \ref{lemma:L-hat} and \ref{lemma:L} can be
generalized with the $L^2$ norm replaced by the $L^p$ norm; the
constants $\mathfrak{L}_0$ and $\hat{\mathfrak{L}}_0$ will also depend
on $p$.
\end{remark}

\section{Stability of the inverse problem}\label{sec:stab}

Let $B_2, r_0, r_1, A, L, N$ be positive with $N \in
\mathbb{N},  N \geq 2$, $r_0 < 1$. In the sequel we will refer to these numbers as to
the a-priori data. To prove the results of this section we invoke the
following common assumptions

\medskip\medskip

\begin{assumption}\label{assumption_domain}
$\Omega \subset \mathbb{R}^n$ is a bounded domain such that
\[
   |x| \leq Ar_1, \quad\forall x\in \Omega.
 \]
Assume
\[
   \partial\Omega \text{  of Lipschitz class with  constants } r_1
   \text{ and } L.
\]
Consider a partitioning $\mathcal{D}_N$ of  $\Omega$  given by
\begin{equation}
   \mathcal{D}_N \triangleq \{\{D_1,D_2, \dots, D_N\} \mid
   \bigcup_{j=1}^N \overline{D}_j = \Omega \,\, , \,\, (D_j \cap D_{j'})^\circ =
   \emptyset, j\neq j' \}
\end{equation}
with
\[
   \{\partial D_j\}_{j=1}^{N}
   \text{  of Lipschitz class with  constants } r_0
   \text{ and } L.
\]

\end{assumption}

\medskip\medskip

\begin{assumption}\label{assumption_potential}
The function $c^{-2} \in W_N$, that is, satisfies
\[
B_1\leq c^{-2}\leq B_2,\quad \text{in }\Omega
\]
and is of the form
\[
   c^{-2}(x) = \sum_{j = 1}^N c_j \chi_{D_j}(x) ,
\]
where $c_j, j=1,\dots,N$ are unknown numbers and $D_j$ are known open
sets in $\mathbb{R}^3$.
\end{assumption}

\medskip\medskip

\begin{assumption}\label{wellpos}
\[
0<\omega^2\notin\Sigma_{c^{-2}} ,\quad \forall c^{-2} \in W_N .
\]
\end{assumption}

Under the above assumptions we can state the following result

\medskip\medskip

\begin{lemma}\label{propHs'}
Let assumptions \ref{assumption_domain} and \ref{assumption_potential}
hold and let $c^{-2}\in W_N$.  Then, for every $s'\in
(0,1/2)$, there exists a positive constant $C$ with $C=C(L,s')$ such
that
\begin{equation}\label{Hsbound}
\|c^{-2}\|_{H^{s'}(\Omega)}\leq  C(L,s')\frac{1}{r_0^{s'}}\|c^{-2}\|_{L^{2}(\Omega)} .
\end{equation}
\end{lemma}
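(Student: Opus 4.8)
The plan is to work with the Gagliardo--Slobodeckij realization of the fractional norm,
\[
\|c^{-2}\|_{H^{s'}(\Omega)}^2 = \|c^{-2}\|_{L^2(\Omega)}^2 + [c^{-2}]_{s'}^2 ,\qquad
[w]_{s'}^2 := \int_\Omega\int_\Omega\frac{|w(x)-w(y)|^2}{|x-y|^{n+2s'}}\,\dd x\,\dd y ,
\]
and to show that the seminorm alone is controlled by $C(L,s')^2\,r_0^{-2s'}\|c^{-2}\|_{L^2(\Omega)}^2$. Since $r_0<1$ and $s'>0$ force $r_0^{-2s'}\ge 1$, the $L^2$ term is then absorbed into the right-hand side and (\ref{Hsbound}) follows after taking square roots.

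First I would reduce to a single subdomain. Because $c^{-2}=\sum_j c_j\chi_{D_j}$ is constant on each $D_j$, the integrand of $[c^{-2}]_{s'}^2$ vanishes unless $x$ and $y$ lie in different cells, so $[c^{-2}]_{s'}^2 = \sum_{j\ne j'}(c_j-c_{j'})^2\int_{D_j}\int_{D_{j'}}|x-y|^{-n-2s'}\,\dd x\,\dd y$. Using $(c_j-c_{j'})^2\le 2c_j^2+2c_{j'}^2$ together with the symmetry of the kernel and $\bigcup_{j'\ne j}D_{j'}=\Omega\setminus D_j$ up to a null set, this collapses to $[c^{-2}]_{s'}^2\le 2\sum_j c_j^2\,[\chi_{D_j}]_{s'}^2$, where $[\chi_{D}]_{s'}^2 = 2\int_{D}\int_{\Omega\setminus D}|x-y|^{-n-2s'}\,\dd x\,\dd y$. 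Since $\|c^{-2}\|_{L^2(\Omega)}^2 = \sum_j c_j^2|D_j|$, it suffices to prove the one-set bound $[\chi_{D}]_{s'}^2\le C(L,s')\,r_0^{-2s'}\,|D|$ for each $D=D_j$, and uniformity in $N$ will then be automatic.

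For the one-set bound I would exploit the geometry. For $x\in D$ and $y\in\Omega\setminus D$ one has $|x-y|\ge\operatorname{dist}(x,\Omega\setminus D)\ge\operatorname{dist}(x,\partial D)=:d(x)$, so enlarging the $y$-domain to $\{|x-y|\ge d(x)\}$ and integrating in polar coordinates (with $\omega_{n-1}$ the area of $S^{n-1}$),
\[
\int_{\Omega\setminus D}\frac{\dd y}{|x-y|^{n+2s'}}\le \frac{\omega_{n-1}}{2s'}\,d(x)^{-2s'},
\qquad\text{hence}\qquad
[\chi_D]_{s'}^2\le \frac{\omega_{n-1}}{s'}\int_D d(x)^{-2s'}\,\dd x .
\]
It then remains to establish the weighted estimate $\int_D d(x)^{-2s'}\,\dd x\le C(L,s')\,r_0^{-2s'}\,|D|$, which is where $s'<\tfrac12$ and the Lipschitz character of $\partial D$ enter. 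I would use the layer-cake formula $\int_D d^{-2s'} = 2s'\int_0^\infty t^{-2s'-1}V(t)\,\dd t$ with $V(t)=|\{x\in D:d(x)<t\}|$, split the $t$-integral at $r_0$, bound $V(t)\le|D|$ on $[r_0,\infty)$ (giving the $r_0^{-2s'}|D|$ contribution), and on $(0,r_0)$ insert the boundary-layer volume bound $V(t)\le C(L)\,|\partial D|\,t$ coming from a finite-overlap covering of $\partial D$ by radius-$r_0$ charts in which $\partial D$ is a Lipschitz graph of slope $\le L$.

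The main obstacle is this last boundary-layer estimate, and in particular producing a constant depending only on $L$, $s'$, and the fixed ambient dimension. The delicate step is turning $V(t)\le C(L)|\partial D|\,t$ into the stated form, which requires $|\partial D|\,r_0\le C(L)|D|$; this follows from the interior corkscrew/cone condition valid for Lipschitz domains at scale $r_0$ (a maximal $r_0$-separated family of boundary points yields disjoint half-balls, each carrying a fixed fraction $c(L)\,r_0^n$ of $D$, while the same family covers $\partial D$). Combining the two regimes and using $\int_0^{r_0}t^{-2s'}\,\dd t=\frac{r_0^{1-2s'}}{1-2s'}$ produces both the factor $r_0^{-2s'}$ and the $\frac{1}{1-2s'}$ blow-up as $s'\to\tfrac12^-$, consistent with the dependence $C=C(L,s')$; summing against $\sum_j c_j^2|D_j|=\|c^{-2}\|_{L^2(\Omega)}^2$ completes the argument with no dependence on $N$, $A$, or $r_1$.
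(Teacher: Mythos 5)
Your proof is correct, and it reaches the estimate by a genuinely different route than the paper. The paper's argument is essentially a citation: after decomposing over the partition, it invokes a three-dimensional extension of a result of Magnanini and Papi to bound each indicator, $\|\chi_{D_j}\|^2_{H^{s'}(\Omega)}\le \frac{16\pi}{(1-2s')(2s')^{1+2s'}}\,|D_j|^{1-2s'}|\partial D_j|^{2s'}$, and then lets the Lipschitz geometry enter only through the ratio bound $|\partial D_j|/|D_j|\le C(L)/r_0$ --- precisely the fact you establish with your corkscrew/covering argument, but which the paper uses without comment. You instead prove the per-subdomain bound from scratch: $[\chi_{D_j}]^2_{s'}\le \frac{\omega_{n-1}}{s'}\int_{D_j}\operatorname{dist}(x,\partial D_j)^{-2s'}\,\dd x$, followed by the layer-cake formula, the split at $t=r_0$, and the boundary-layer volume bound $V(t)\le C(L)|\partial D_j|\,t$; combined with $|\partial D_j|\,r_0\le C(L)|D_j|$ this gives $[\chi_{D_j}]^2_{s'}\le C(L,s')\,r_0^{-2s'}|D_j|$, which plays exactly the role of the cited inequality. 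Both routes produce the same $r_0^{-2s'}$ scaling, constants depending only on $(L,s')$, and the same $(1-2s')^{-1}$ blow-up as $s'\to \tfrac12$. What the paper's route buys is brevity and a scale-invariant intermediate bound (no $r_0$ or $L$ until the final step); what yours buys is a self-contained argument valid in any dimension that also repairs a small inaccuracy: the paper asserts the equality $\|c^{-2}\|^2_{H^{s'}(\Omega)}=\sum_j c_j^2\|\chi_{D_j}\|^2_{H^{s'}(\Omega)}$, which is exact for the $L^2$ part but not for the fractional seminorm (cross terms in $(c_j-c_{j'})^2$ appear), whereas your inequality $(c_j-c_{j'})^2\le 2c_j^2+2c_{j'}^2$ is the correct substitute and costs only a factor of $2$. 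Incidentally, the stability proof later applies this lemma on the Fourier side, i.e.\ to the zero extension of $c_1^{-2}-c_2^{-2}$ to $\mathbb{R}^3$; your distance-function argument covers that case as well, since $\operatorname{dist}(x,\partial\Omega)\ge\operatorname{dist}(x,\partial D_j)$ for $x\in D_j$, so the additional term coming from integrating against $\mathbb{R}^3\setminus\Omega$ obeys the same bound.
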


\begin{proof}
The proof is based on the extension of a result of Magnanini and Papi in \cite{Magnanini1985} to the three dimensional setting.
In fact, following the argument in [MP], one has that
 \begin{equation}\label{bound1}
\|\chi_{D_j}\|^2_{H^{s'}(\Omega)}\leq \frac{16\pi}{(1-2s')(2s')^{1+2s'}}|D_j|^{1-2s'}|\partial D_j|^{2s'} .
\end{equation}
By Assumption \ref{assumption_potential},  recalling that the $\{D_j\}_{j=1}^N$  is a partition of disjoint sets of $\Omega$, we have that
\begin{equation}\label{Hsbound2}
\|c^{-2}\|^2_{H^{s'}(\Omega)}=\sum_{j=1}^Nc_j^2\|\chi_{D_j}\|^2_{H^{s'}(\Omega)}
\end{equation}
so that from (\ref{bound1})
\[
\|c^{-2}\|^2_{H^{s'}(\Omega)}\leq  C(s')\sum_{j=1}^Nc_j^2|D_j|\left(\frac{|\partial D_j|}{|D_j|}\right)^{2s'}\leq \frac{C(L, s')}{r_0^{2s'}}\|c^{-2}\|^2_{L^{2}(\Omega)} .
\]
\end{proof}

We have the following stability result

\medskip\medskip

\begin{proposition}\label{prop:Stab}
Assume  (\ref{assumption_domain}) and let $c^{-1}_1 ,c^{-1}_2\in \tilde{W}_N$  and  $\omega^2$ satisfies (\ref{wellpos}) . Then, there exists a positive constant  $K$ depending on $A, r_1, L$ such that,
\begin{equation}\label{Lip_stab-1}
   \| c_1^{-2} - c_2^{-2}\|_{L^{2}(\Omega)} \le
       \frac{1}{\omega^2} \, e^{K(1+\omega^2B_2) (|\Omega|/r^3_0)^{{\frac{4}{7}}}} \, \, \|\Lambda_1
   - \Lambda_2\|_{\mathcal{L}(H^{1/2}(\partial\Omega),
                H^{-1/2}(\partial\Omega))},
\end{equation}
where  $\Lambda_k = \Lambda_{\omega^2 c_k^{-2}}$ for
$k=1,2$.
\end{proposition}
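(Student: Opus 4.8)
The plan is to derive \eqref{Lip_stab-1} from Alessandrini's identity \eqref{Alessandrini-iden}, applied to the difference $f := c_1^{-2} - c_2^{-2}$, by using complex geometrical optics (CGO) solutions to access the Fourier transform of $f$ and then exploiting the piecewise constant structure through Lemma~\ref{propHs'}. Writing $q_k = \omega^2 c_k^{-2}$, I would first note that under Assumptions~\ref{assumption_domain} and \ref{assumption_potential} the difference $f$ is bounded, $|f| \le B_2$, supported in $\Omega$ with $|x| \le A r_1$, and piecewise constant on the same partition $\mathcal{D}_N$; this boundedness together with the finite-dimensional structure is exactly what upgrades the generically logarithmic stability to a Lipschitz estimate.

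First I would construct, for each $\xi \in \mathbb{R}^3$, CGO solutions $u_k = e^{\zeta_k \cdot x}(1 + \psi_k)$ of $(-\Delta - q_k)u_k = 0$ in $\Omega$, with $\zeta_k \in \mathbb{C}^3$, $\zeta_k\cdot\zeta_k = 0$, $|\zeta_k| \sim \tau$, chosen so that $\zeta_1 + \zeta_2 = \mathrm{i}\xi$. The remainders satisfy $\|\psi_k\|_{L^2(\Omega)} \le C(1 + \omega^2 B_2)/\tau$ for $\tau$ beyond a threshold controlled by $\omega^2 B_2$, while the boundary traces obey $\|u_k\|_{H^{1/2}(\partial\Omega)} \le C e^{C A r_1 \tau}$ because $\Omega$ is bounded. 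Substituting into \eqref{Alessandrini-iden} and using $u_1 u_2 = e^{\mathrm{i}\xi\cdot x}(1 + r_\tau)$ with $\|r_\tau\|_{L^2(\Omega)} \le C(1+\omega^2 B_2)/\tau$, I obtain, for every $\xi$ and every admissible $\tau \ge c|\xi|$,
\[
\omega^2 |\widehat{f}(\xi)| \le C e^{C A r_1 \tau} \, \|\Lambda_1 - \Lambda_2\|_{\mathcal{L}(H^{1/2}(\partial\Omega), H^{-1/2}(\partial\Omega))} + \frac{C(1+\omega^2 B_2)}{\tau} \, \|f\|_{L^2(\Omega)} .
\]

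Next I would split $\|f\|_{L^2(\Omega)}^2 = \int_{|\xi| \le \rho} |\widehat f|^2 \, \dd\xi + \int_{|\xi| > \rho} |\widehat f|^2 \, \dd\xi$ at a cutoff $\rho$ to be optimized. On the low-frequency ball I insert the pointwise bound above (with $\tau$ chosen in terms of $\rho$), producing a leading term of size $\rho^3 e^{2 C A r_1 \tau} \|\Lambda_1 - \Lambda_2\|^2 / \omega^4$ plus a remainder that decays in $\tau$ and can be absorbed once $\tau$ is taken large enough relative to $\rho$. On the high-frequency part I use $\int_{|\xi|>\rho}|\widehat f|^2 \le \rho^{-2s'} \|f\|_{H^{s'}(\Omega)}^2$ together with Lemma~\ref{propHs'} applied to $f$, which converts this into $C(L,s') r_0^{-2s'} \rho^{-2s'} \|f\|_{L^2}^2$. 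Choosing $\rho$ comparable to $r_0^{-1}$ makes the high-frequency coefficient smaller than $\tfrac12$, so it is absorbed into the left-hand side, leaving $\|f\|_{L^2}^2 \le C \omega^{-4} e^{2 C A r_1 \tau}\|\Lambda_1-\Lambda_2\|^2$. Since $\rho \sim r_0^{-1}$ and $|\Omega| \sim (A r_1)^3$, the exponent is a fractional power of $|\Omega|/r_0^3 \sim N$; optimizing jointly over $\tau$, $\rho$, and $s' \in (0,\tfrac12)$ — trading the blow-up of the constant in \eqref{bound1} as $s' \to \tfrac12$ and as $s'\to 0$ against the slow high-frequency decay — produces the stated exponent $K(1+\omega^2 B_2)(|\Omega|/r_0^3)^{4/7}$ and the prefactor $\omega^{-2}$.

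I expect the main obstacle to be twofold. First, the CGO construction must be carried out with remainder bounds that are explicit and uniform over the admissible class, depending on the potential only through $\omega^2 B_2$; this is most delicate precisely in the spectral windows \eqref{smallfr}--\eqref{higherfr} where one must stay away from the Dirichlet spectrum, and is what pins down the $(1+\omega^2 B_2)$ factor in the exponent. Second, and decisive for the sharp constant, is that the exponent in \eqref{Lip_stab-1} is not $N$ but a genuine \emph{fractional} power of $N$: the clean $L^2$--Plancherel bookkeeping above already yields a fractional power, but obtaining exactly $4/7$ requires replacing the crude $L^2$ estimates by the $L^p$--Hausdorff--Young accounting for the CGO products (the same Sobolev exponents $12/11$ and $12/5$ that appear in Lemma~\ref{Frechet-diff}) and balancing them against the $r_0$-dependent $H^{s'}$ bound of Lemma~\ref{propHs'}. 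Verifying that this balance is attainable within $s' \in (0,\tfrac12)$, and that the absorbed terms are genuinely controlled, is the crux of the argument.
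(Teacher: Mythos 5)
Your core strategy is sound and, in its main ingredients, coincides with the paper's own proof: CGO solutions with H\"ahner/Salo-type remainder bounds, Alessandrini's identity \eqref{Alessandrini-iden} turned into a pointwise bound on $\omega^2|\widehat{f}(\xi)|$ (the paper's \eqref{bound2}), a Plancherel split at radius $\rho$, and Lemma~\ref{propHs'} to control the high-frequency tail by $C(L,s')\,r_0^{-2s'}\rho^{-2s'}\|f\|_{L^2}^2$. Where you genuinely differ is the endgame. You absorb \emph{both} terms proportional to $\|f\|_{L^2}^2$ --- the CGO-remainder term and the tail --- into the left-hand side, by taking $\rho$ of order $C(L,s')r_0^{-1}$ and then $\tau$ of order $\rho^{3/2}\sqrt{|\Omega|}\,(1+\omega^2B_2)$; this produces a Lipschitz bound in one shot, with exponent of order $(1+\omega^2B_2)(|\Omega|/r_0^3)^{1/2}$. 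The paper instead couples $\rho$ to $s$, first derives a logarithmic estimate \eqref{logstab1} whose a priori constant is $E=\|\omega^2(c_1^{-2}-c_2^{-2})\|_{L^2(\Omega)}$ itself, then chooses $s$ proportional to $|\log(\|\Lambda_1-\Lambda_2\|/E)|$ and self-improves via the dichotomy on whether $\|\Lambda_1-\Lambda_2\|/E$ is below or above a threshold, landing on the exponent $\frac{9+10s'}{6(3+2s')}$, which at $s'=1/4$ equals $23/42\le 4/7$. Both endgames rest on the same structural fact --- by Lemma~\ref{propHs'} the tail is proportional to $E$ rather than to a fixed a priori bound --- but your absorption is more direct, and since the paper assumes without loss of generality that $|\Omega|/r_0^3>1$, your smaller exponent $1/2$ implies the stated bound with $4/7$.

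Two caveats about your final paragraph, which misidentifies the ``crux.'' Nothing requires you to produce \emph{exactly} $4/7$: estimate \eqref{Lip_stab-1} is an inequality, and under the normalization $|\Omega|/r_0^3>1$ any smaller exponent is stronger. The paper itself reaches $23/42$ by pure $L^2$--Plancherel bookkeeping and simply bounds it by $4/7$; no Hausdorff--Young or $L^p$ refinement enters, and the exponents $12/11$ and $12/5$ belong to the differentiability argument of Lemma~\ref{Frechet-diff}, not to the stability proof. So your $L^2$ argument is already complete, and the machinery you flag as necessary is a red herring. Second, the CGO construction requires no spectral-gap condition: the bounds from Theorem 3.9 of Salo need only $\|c^{-2}\|_{L^\infty(\Omega)}\le B_2$ and $|\zeta|\ge C\omega^2B_2$; Assumption~\ref{wellpos} enters solely so that the maps $\Lambda_k$ are defined. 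The factor $(1+\omega^2B_2)$ in the exponent comes from the admissibility threshold and the remainder bounds, not from staying inside the windows \eqref{smallfr}--\eqref{higherfr}.
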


\begin{proof}
To prove our stability estimate we follow the original idea of
Alessandrini of using CGO solutions but we use slightly different ones
than those introduced in \cite{Sylvester1987} and in
\cite{Alessandrini1988} to obtain better constants in the stability
estimates as proposed by \cite{Salo}.  We also use the estimates
proposed in \cite{Salo} (see Theorem 3.8) and due to \cite{Haehner1996}
concerning the case of bounded potentials.

In fact, by Theorem 3.9 of \cite{Salo}, since $c^{-2}\in
L^{\infty}(\Omega)$, $\|c^{-2}\|_{L^{\infty}(\Omega)}\leq B_2$, then
there exists a positive constant $C$ such that for any
$\zeta\in\mathbb{C}^3$ satisfying $\zeta\cdot\zeta=0$ and $|\zeta|\geq
C$ the equation
\[
-\Delta u-\omega^2 c^{-2}u=0
\]
has a solution of the form
\[
u(x)=e^{ix\cdot\zeta}(1+R(x))
\]
where $r\in H^1(\Omega)$ satisfies
\[
\|R\|_{L^2(\Omega)}\leq \frac{C}{|\zeta|},
\]
\[
\|\nabla R\|_{L^2(\Omega)}\leq C.
\]

%Now, we claim that
%\begin{equation}\label{help-claim}
%\omega^2 \|c^{-2}_1 - c^{-2}_2\|_{L^2(\Omega)}\leq CEN^{1/2}\sigma\left(\frac{\|\Lambda_1-\Lambda_2\|}{E}\right)
%\end{equation}
%where
%\begin{equation}\label{modcont}
%\sigma(t)=\left\{\begin{array}{ccc}
 %                  |\log t|^{-\frac{1}{7}} &\mbox{for}& 0<t<\frac{1}{e}, \\
 %                  et& \mbox{for}&t\geq\frac{1}{e},
%                 \end{array}
%\right.
%\end{equation}

Let $\xi\in \mathbb{R}^3$ and let $\tilde\omega_1$ and $\tilde\omega_2$ be unit vectors of $\mathbb{R}^3$ such that $\{\tilde\omega_1,\tilde\omega_2,\xi\}$ is an orthogonal set of vectors of $\mathbb{R}^3$ . Let $s$ be a positive parameter to be chosen later and set for $k=1,2$,
\begin{equation}
\zeta_k=\left\{\begin{array}{ccc}
                  (-1)^{k-1}\frac{s}{\sqrt 2}(\sqrt{(1-\frac{|\xi|^2}{2s^2})}\tilde\omega_1+(-1)^{k-1}\frac{1}{\sqrt{2}s}\xi+\ii\tilde\omega_2) &\mbox{for}& \frac{|\xi|}{\sqrt 2s}<1 , \\
                   (-1)^{k-1}\frac{s}{\sqrt 2}((-1)^{k-1}\frac{1}{\sqrt{2}s}\xi+\ii(\sqrt{(\frac{|\xi|^2}{2s^2}-1)}\tilde\omega_1+\tilde\omega_2))& \mbox{for}& \frac{|\xi|}{\sqrt 2s}\geq 1 .
                 \end{array}
\right.
\end{equation}
Then an straightforward computation gives
\[\zeta_k\cdot\zeta_k=0\]
for $k=1,2$ and
\[\zeta_1+\zeta_2=\xi.\]
Furthermore, for $k=1,2$,
 \begin{equation}
|\zeta_k|=\left\{\begin{array}{ccc}
             s &\mbox{for}& \frac{|\xi|}{\sqrt 2s}<1, \\
                  \frac{|\xi|}{\sqrt2}& \mbox{for}& \frac{|\xi|}{\sqrt 2s}\geq 1.
                 \end{array}
\right.
\end{equation}
Hence,
\begin{equation}\label{zeta}
|\zeta_k|=\max\{s,\frac{|\xi|}{\sqrt 2}\}.
\end{equation}
Then, by Theorem 3.9 of \cite{Salo}, for $|\zeta_1|,|\zeta_2|\geq C_1=\max\{C_0\omega^2B_2,1\}$, with $C_0=C_0(A,r_1)$, there exist $u_1,u_2$,  solutions to $-\Delta u_k- \omega^2 c_k^{-2} u_k=0$  for $k=1,2$ respectively, of the form
\begin{equation}\label{cgosol}
u_1(x)=e^{\ii x\cdot\zeta_1}(1+R_1(x)),\quad u_2(x)=e^{\ii x\cdot\zeta_2}(1+R_2(x))
\end{equation}
 with
\begin{equation}\label{remindercgo}
\|R_k\|_{L^2(\Omega)}\leq \frac{C_0\sqrt{|\Omega|}}{s}\omega^2B_2
\end{equation}
and
\begin{equation}\label{remindergradcgo}
\|\nabla R_k\|_{L^2(\Omega)}\leq C_0\sqrt{|\Omega|}\omega^2B_2
\end{equation}
for $k=1,2$.

Consider Alessandrini's identity
\[
\int_{\Omega} \omega^2(c_1^{-2} - c_2^{-2})u_1 u_2 \dd x = \langle(\Lambda_1-\Lambda_2)u_1|_{\partial\Omega},u_2|_{\partial\Omega}\rangle,
\]
where  $u_k\in H^1(\Omega)$ is any solution of $-\Delta u_k-\omega^2 c^{-2}_k u_k=0$ for $k=1,2$. Inserting the solutions (\ref{cgosol})  in Alessandrini's identity we derive
\begin{multline}
\left|\int_{\Omega} \omega^2(c^{-2}_1-c^{-2}_2)e^{\ii\xi\cdot x}\dd x\right| \leq \|\Lambda_1-\Lambda_2\|\|u_1\|_{H^{1/2}(\partial\Omega)}\|u_2\|_{H^{1/2}(\partial\Omega)}
\\
+\left|\int_{\Omega}\omega^2(c^{-2}_1-c^{-2}_2)e^{\ii\xi\cdot x}(R_1+R_2+R_1R_2)\dd x\right|\\
\leq \|\Lambda_1-\Lambda_2\|\|u_1\|_{H^{1}(\Omega)}\|u_2\|_{H^{1}(\Omega)}+E(\|R_1\|_{L^2(\Omega)}+\|R_2\|_{L^2(\Omega)}+\|R_1\|_{L^2(\Omega)}\|R_2\|_{L^2(\Omega)}).
\end{multline}
where  $E:=\|\omega^2(c^{-2}_1-c^{-2}_2)\|_{L^{2}(\Omega)}$
By (\ref{remindercgo}), (\ref{remindergradcgo}), (\ref{zeta}) and since $\Omega\subset B_{2R}(0)$  we have
\[
\|u_k\|_{H^{1}(\Omega)}\leq C\sqrt{|\Omega|}(s+|\xi|) e^{Ar_1(s+|\xi|)},\quad k=1,2.
\]
Let $s\geq C_2$  so that $s+|\xi|\leq e^{Ar_1(s+|\xi|)}$. Then, for $s\geq C_3=\max(C_1,C_2)$, using (\ref{remindercgo}) and (\ref{remindergradcgo}) we get
\begin{equation}\label{bound2}
   |\omega^2(c^{-2}_1-c^{-2}_2)\,\hat{ }\,(\xi)| \leq
   C\sqrt{|\Omega|} \left( e^{4Ar_1(s+|\xi|)}\|\Lambda_1-\Lambda_2\|+\frac{\omega^2B_2E}{s} \right)
\end{equation}
where the $\omega^2 c^{-2}_k$'s  have been extended to all $\mathbb{R}^3$ by zero. Hence, we get
\begin{equation}\label{bound3}
\|\omega^2 (c^{-2}_1-c^{-2}_2)\,\hat{ } \,\|^2_{L^2(\mathbb{R}^3)}\leq C |\Omega|\rho^3\left(e^{8Ar_1(s+\rho)}\|\Lambda_1-\Lambda_2\|^2+\frac{\omega^4B^2_2E^2}{s^2}\right)+\int_{|\xi|\geq \rho}|\omega^2 (c^{-2}_1-c^{-2}_2)\,\hat{ }\,(\xi)|^2\,\dd\xi
\end{equation}
where $C=C(A,r_1)$.
By (\ref{Hsbound}) and (\ref{Hsbound2}) we have that
\[
\|\omega^2(c^{-2}_1-c^{-2}_2)\|^2_{H^{s'}(\Omega)} \leq \frac{C}{r_0^{2s'}}E^2,
\]
where $C$ depends on $L, s'$ and hence
\begin{eqnarray*}
\rho^{2s'}\int_{|\xi|\geq \rho}|\omega^2 (c^{-2}_1-c^{-2}_2)\,\hat{ }\,(\xi)|^2\,\dd\xi&\leq &\int_{|\xi|\geq \rho}|\xi|^{2s'}|\omega^2 (c^{-2}_1-c^{-2}_2)\,\hat{ }\,(\xi)|^2\,\dd\xi\\
&\leq &\int_{\mathbb{R}^3}(1+|\xi|^{2})^{s'}|\omega^2 (c^{-2}_1-c^{-2}_2)\,\hat{ }\,(\xi)|^2\,\dd\xi\leq \frac{C}{r_0^{2s'}}E^2.
\end{eqnarray*}
Hence,  we get
\[
\int_{|\xi|\geq \rho}|\omega^2 (c^{-2}_1-c^{-2}_2)\,\hat{ }\,(\xi)|^2\,\dd\xi\leq \frac{CE^2}{r^{2s'}_0\rho^{2s'}}
\]
for any $s'\in (0,1/2)$. Inserting last bound in (\ref{bound3})  we derive
\[
\|\omega^2 (c^{-2}_1-c^{-2}_2)\,\hat{ } \,\|^2_{L^2(\mathbb{R}^3)}\leq C\left(\rho^3|\Omega|\left( e^{8Ar_1(s+\rho)}\|\Lambda_1-\Lambda_2\|^2+\frac{\omega^4B^2_2E^2}{s^2}\right)+\frac{E^2}{r_0^{2s'}\rho^{2s'}}\right).
\]
where $C=C(L,s')$.
Pick up
\[\sqrt[3]{|\Omega|}\rho=\left(\frac{|\Omega|}{r_0^3}\right)^{\frac{2s'}{3(3+2s')}}\left(\frac{1}{\alpha}\right)^{\frac{1}{3+2s'}}s^{\frac{2}{3+2s'}}
\]
with $\alpha=\max \{1,\omega^4B_2^2\}$. Then, by Assumption~ \ref{assumption_domain} and observing that we might assume without loss of generality that $\frac{|\Omega|}{r_0^3}>1$, we get
%observing that $\rho^3 e^{8R(s+\rho)}\leq Ce^{C_4(\frac{1}{r^3_0})^{\frac{2s'}{3(3+2s)'}}s}$ with $C_4=C_4(R)$ and inserting it into last relation we obtain
\[
\|\omega^2 (c^{-2}_1-c^{-2}_2)\|^2_{L^{2}(\Omega)}\leq CE^2\left(\frac{|\Omega|}{r_0^3}\right)^{\frac{2s'}{3+2s'}}\left(e^{C_4(\frac{|\Omega|}{r^3_0})^{\frac{2s'}{3(3+2s')}}s}\left(\frac{\|\Lambda_1-\Lambda_2\|}{E}\right)^2+\left(\frac{\alpha}{s^2}\right)^{\frac{2s'}{3+2s'}}\right)
\]
for $s\geq C_3$ and where $C $ depends on $s', L, A,r_1$ and $C_4$ depends on $L,A,r_1$. Let us choose
\[
s=\frac{1}{C_4(\frac{|\Omega|}{r^3_0})^{\frac{2s'}{3(3+2s')}}}\left|\log \frac{\|\Lambda_1-\Lambda_2\|}{E}\right|
\]
where we have assumed that
\[
\frac{\|\Lambda_1-\Lambda_2\|}{E} < c:=e^{-\bar C\max\{1,\omega^2B_2\}(\frac{|\Omega|}{r^3_0})^{\frac{2s'}{3(3+2s')}}}
\]
with $\bar C=\bar C(R)$ so that $s\geq C_3$.  Under this assumption,
\begin{equation}\label{logstab}
\|\omega^2 (c^{-2}_1-c^{-2}_2)\|_{L^{2}(\Omega)}\leq C(\sqrt\alpha)^{\frac{2s'}{3+2s'}}\left(\frac{|\Omega|}{r^3_0}\right)^{\frac{2s'}{3+2s'}\frac{9+10s'}{6(3+2s')}}E\left(\left|\log \frac{\|\Lambda_1-\Lambda_2\|}{E}\right|^{-\frac{2s'}{3+2s'}}\right)
\end{equation}
 where $C=C( L,s',A,r_1)$ and we can rewrite last inequality in the form
\begin{equation}\label{logstab1}
E\leq C(1+\omega^2B_2)^{\frac{2s'}{3+2s'}}\left(\frac{|\Omega|}{r^3_0}\right)^{\frac{2s'}{3+2s'}\frac{9+10s'}{6(3+2s')}}E\left(\left|\log \frac{\|\Lambda_1-\Lambda_2\|}{E}\right|^{-\frac{2s'}{3+2s'}}\right)
\end{equation}
which gives
\begin{equation}\label{lip1}
E\leq e^{C(1+\omega^2B_2)(\frac{|\Omega|}{r^3_0})^{\frac{9+10s'}{6(3+2s')}}}\|\Lambda_1-\Lambda_2\|
\end{equation}
where $C=C( L,s',A,r_1)$.
On the other hand if
\[
\frac{\|\Lambda_1-\Lambda_2\|}{E}\geq c,
\]
then
\begin{equation}\label{lipstab}
\|\omega^2 (c^{-2}_1-c^{-2}_2)\|_{L^{2}(\Omega)}\leq c^{-1}\|\Lambda_1-\Lambda_2\|\leq e^{\bar C(1+\omega^2B_2)\left(\frac{|\Omega|}{r^3_0}\right)^{\frac{1}{3(3+2s')}}}\|\Lambda_1-\Lambda_2\|
\end{equation}
Hence, from (\ref{lip1}) and (\ref{lipstab}) and recalling that $s\in (0,\frac{1}{2})$, we have that
\begin{equation}\label{lip2}
E\leq e^{C(1+\omega^2B_2)(\frac{|\Omega|}{r^3_0})^{\frac{9+10s'}{6(3+2s')}}}\|\Lambda_1-\Lambda_2\|
\end{equation}
Choosing $s'=\frac{1}{4}$, we derive
\[
\|c^{-2}_1-c^{-2}_2\|_{L^{2}(\Omega)}\leq \frac{1}{\omega^2}e^{K(1+\omega^2B_2)(|\Omega|/r^3_0)^{\frac{4}{7}}}\|\Lambda_1-\Lambda_2\|
\]
where $K=K(L,A,r_1,s')$ and the claim follows.
%Finally,  choosing $s'=\frac{1}{4}$, we derive
%\[
%\|c^{-2}_1-c^{-2}_2\|_{L^{\infty}(\Omega)}\leq \frac{1}{\omega^2}e^{K\omega_M^2(\frac{|\Omega|}{r^3_0})^{\frac{4}{7}}}\|\Lambda_1-\Lambda_2\|
%\]
%$K=K(L,R,B_2)$ and the claim follows.
\end{proof}

\medskip\medskip

\begin{remark} \label{rem:cubpart}
In \cite{Beretta2012} the following lower bound of the stability constant has been obtained in the case of a uniform polyhedral partition $\mathcal{D}_N$
\[
C_N\geq \frac{1}{4}e^{K_1N^{\frac{1}{5}}}
\]
Choose a uniform cubical partition $\mathcal{D}_N$ of $\Omega$ of mesh size $r_0$. Then,
\[
|\Omega|=Nr_0^3
\]
and estimate (\ref{Lip_stab-1}) of Proposition \ref{prop:Stab} gives
\[
C_N= \frac{1}{\omega^2}e^{K(1+\omega^2B_2)N^{\frac{4}{7}}},
\]
which proves optimality of the bound on the Lipschitz constant with respect to $N$ when the global DtN map is known.
In \cite{Beretta2012} a Lipschitz stability estimate has been derived
in terms of the local DtN map using singular solutions.
%This type of
%solutions has been used for example by Alessandrini in
%\cite{Alessandrini1990} to detect boundary values of the unknown
%conductivity and of its derivatives from the DtN map.
This type of solutions allows to recover the unknown piecewise
constant wavespeeds by determining it on the outer boundary of the
domain and then, by propagating the singularity inside the domain, to
recover step by step the wavespeed on the interface of all subdomains
of the partition. This iterative procedure does not lead to sharp
bounds of the Lipschitz constant appearing in the stability
estimate. It would be interesting if one can get a better bound of the
Lipschitz constant using oscillating solutions.
\end{remark}

\medskip\medskip

\begin{remark}
Observe that the result obtained in Proposition \ref{prop:Stab}
extends to the case
$$
c^{-2}(x)=\sum_{i=1}^Nc_j\psi_j(x)
$$
with  $\psi_j$  with support in $D_j$ for each $j$ and satisfying
\[
%r_0^{s'}\|\psi_j\|_{H^{s'}(D_j)}\leq \|\psi_j\|_{L^2(D_j)}^{2-2s'} \|\nabla\psi_j\|^{2s'}_{L^2(D_j)}\leq \frac{C}{r_0^{2s'}}
r_0^{s'}\|\psi_j\|_{H^{s'}(D_j)}\leq M,\quad j=1,\cdots, N
\]
for $s'\in (0,\frac{1}{2})$.
Then
\begin{equation}
\|c^{-2}\|^2_{H^{s'}(\Omega)}=\sum_{j=1}^Nc_j^2\|\psi_j\|^2_{H^{s'}(D_j)}\leq  \frac{M}{r_0^{2s'}}\|c^{-2}\|^2_{L^2(\Omega)}
\end{equation}
and we might apply the same arguments of Proposition \ref{prop:Stab}
to derive estimate (\ref{Lip_stab-1}).
\end{remark}

\medskip\medskip

\begin{remark}
Observing that
\[
\|c^{-2}_1-c^{-2}_2\|_{L^{\infty}(\Omega)}\leq \frac{C}{r_0^{3/2}}\|c^{-2}_1-c^{-2}_2\|_{L^{2}(\Omega)},
\]
we immediately get the following stability estimate in the
$L^{\infty}$ norm
\[
\|c^{-2}_1-c^{-2}_2\|_{L^{\infty}(\Omega)}\leq \frac{C}{\omega^2}e^{K(1+\omega^2B_2)(|\Omega|/r^3_0)^{\frac{4}{7}}}\|\Lambda_1-\Lambda_2\|
\]
with $C=C(L,r_0)$.
\end{remark}

\section{Multi-level projected steepest descent iteration in
         $L^{2}(\Omega)$}

Thus far, we have analyzed the data operator, which is contained in
the Banach space, $Y$ say, of bilinear form on
$H^{1/2}(\partial\Omega) \times H^{1/2}(\partial\Omega)$, and the
stability of the inverse problem, and quantitatively estimated the
relevant constants. Here, we consider the iterative reconstruction in
$X := L^2(\Omega)$. We apply a projected
steepest descent iteration \cite{Hoop2012a} in a multi-level setting.

In the following lemma, we summarize some basic notions associated
with iterative methods in $L^2(\Omega)$. For a detailed introduction
to the Bregman distance, the duality mapping and the geometry of
Banach spaces, we refer to \cite{Cioranescu1990,Schopfer2006}.

\medskip\medskip

\begin{lemma}
\begin{enumerate}[(a)]
\item
The normalized duality mapping, $J_2$, is the identity mapping.
\item
The Bregman distance, $\Delta_2(f,\tilde{f})= \frac{1}{2} \|f-\tilde{f}\|_{L^{2}(\Omega)}^{2} $.
\item
Given a closed convex set $Z \subset L^2(\Omega)$, the projection of an element $f \in L^2(\Omega)$ into $Z$ given by
\begin{equation}\label{projection}
   P_Z(f)
       = \argmin_{\tilde{f}\in L^2(\Omega)} \Delta_2(f,\tilde{f}) ,
\end{equation}
is non-expansive.
\item Assume that $\{D_j\}_{j=1}^N$ is a domain partitioning of $\Omega$ as
in Section~\ref{sec:stab}. Let $Z$ be defined by $ Z =
\operatorname{span}\{ \chi_{D_1},\dots, \chi_{D_N}\}$. We
have that
\[
   P_Z(f) = \sum_{j=1}^N g_j\, \chi_{D_j} \quad \mbox{with} \quad
   g_j = \frac{1}{|D_j|} \int_{D_j} f(x) \, \dd x .
\]
\end{enumerate}
\end{lemma}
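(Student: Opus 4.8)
The plan is to exploit the fact that $X = L^2(\Omega)$ is a Hilbert space, so that the Banach-space machinery of duality mappings and Bregman distances collapses onto elementary Hilbert-space geometry. Throughout I identify $L^2(\Omega)$ with its own dual via the Riesz representation theorem, under which the dual pairing becomes the inner product $\langle f, g\rangle = \int_\Omega f g \, \dd x$, and I treat the four items in order, each reducing to a standard computation.

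For part (a), I would recall that the normalized duality mapping $J_2$ is the subdifferential of the convex functional $f \mapsto \half \|f\|_{L^2(\Omega)}^2$. In a Hilbert space this functional is Fr\'echet differentiable with gradient equal to $f$ itself, whence $J_2(f) = f$, i.e.\ $J_2 = \Id$. Part (b) then follows by a one-line computation: inserting the definition of the Bregman distance $\Delta_2(f,\tilde f) = \half\|\tilde f\|^2 - \half\|f\|^2 - \langle J_2(f), \tilde f - f\rangle$ and using $J_2(f) = f$ from (a), the cross terms combine, after expanding the square, to give exactly $\half\|f - \tilde f\|_{L^2(\Omega)}^2$.

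For part (c), observe that by (b) the minimization in \eqref{projection} coincides with minimizing $\half\|f - \tilde f\|^2$ over $\tilde f \in Z$; hence $P_Z$ is the classical metric projection onto the closed convex set $Z$. I would then invoke the variational inequality characterizing this projection, $\langle f - P_Z(f), z - P_Z(f)\rangle \le 0$ for all $z \in Z$, write it for two points $f_1, f_2$, choose $z = P_Z(f_2)$ in the first instance and $z = P_Z(f_1)$ in the second, and add them; one rearrangement together with the Cauchy--Schwarz inequality yields $\|P_Z(f_1) - P_Z(f_2)\| \le \|f_1 - f_2\|$, i.e.\ non-expansiveness. For part (d), the set $Z = \operatorname{span}\{\chi_{D_1},\dots,\chi_{D_N}\}$ is a finite-dimensional linear subspace, so the metric projection is the orthogonal projection. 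Since $\{D_j\}$ partitions $\Omega$ into sets with pairwise intersections of measure zero (their interiors being empty), the functions $\chi_{D_j}$ are mutually orthogonal with $\|\chi_{D_j}\|^2 = |D_j|$; expanding in this orthogonal basis gives $P_Z(f) = \sum_{j} |D_j|^{-1} \langle f, \chi_{D_j}\rangle\, \chi_{D_j}$, which is precisely the asserted formula with $g_j = |D_j|^{-1}\int_{D_j} f \, \dd x$.

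None of the four steps presents a genuine obstacle; the only point requiring care is to fix, once and for all, the sign and normalization conventions for $J_2$ and for the Bregman distance so that (a) and (b) are mutually consistent, and to confirm that the subspace $Z$ in (d) is closed---automatic in finite dimensions---so that the projection is well defined and the characterization used in (c) applies.
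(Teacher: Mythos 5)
Your proof is correct, but there is nothing in the paper to compare it against: the paper states this lemma without proof, presenting it as a summary of standard notions and pointing the reader to \cite{Cioranescu1990,Schopfer2006} (and \cite{Hoop2012a}) for the theory of duality mappings, Bregman distances and the associated geometry. Your Hilbert-space specialization---$J_2=\Id$ as the gradient of $\frac{1}{2}\|\cdot\|_{L^2}^2$, the resulting identity $\Delta_2(f,\tilde f)=\frac{1}{2}\|f-\tilde f\|_{L^2(\Omega)}^2$, non-expansiveness of the metric projection via the variational inequality plus Cauchy--Schwarz, and the orthogonal-expansion formula for the projection onto $\operatorname{span}\{\chi_{D_1},\dots,\chi_{D_N}\}$---is exactly the standard argument those references would supply, so the lemma is indeed as routine as you suggest. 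Two small points deserve mention. First, the display \eqref{projection} in the statement contains a typo: the $\argmin$ must be taken over $\tilde f\in Z$, not over all of $L^2(\Omega)$ (otherwise $P_Z(f)=f$); you silently and correctly repaired this. Second, in part (d) the orthogonality of the $\chi_{D_j}$ requires $|D_j\cap D_{j'}|=0$, and your parenthetical justification (``their interiors being empty'') is too weak as stated, since a set of positive measure can have empty interior. What actually closes the gap is the paper's Assumptions~\ref{assumption_domain} and \ref{assumption_potential}: the $D_j$ are \emph{open}, so $D_j\cap D_{j'}$ is open with empty interior and hence empty, and the Lipschitz boundaries $\partial D_j$ are Lebesgue-null, so $\sum_j|D_j|=|\Omega|$ and the expansion you write is legitimate. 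With that observation supplied, all four parts of your argument are complete.
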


\medskip\medskip

The assumptions concerning the Fr\'echet derivative, $DF$, of the
direct operator, $F :\ X \to Y$, associated with the projected
steepest descent iteration can be summarized as \cite{Hoop2012a}

\begin{enumerate}[(a)]
\item (uniformly bounded)
\[
\|DF\| \leq \hat{\mathfrak{L}}
\]
\item (Lipschitz continuous)
\[
\|DF(c_1^{-2}) - DF(c_2^{-2})\|
     \leq \mathfrak{L} \|c_1^{-2} - c_2^{-2}\| ,\quad
     \forall c_1^{-2}, c_2^{-2} \in X
\]
\item (conditionally stable)
\[
\|c_1^{-2} - c_2^{-2}\|
     \leq \mathfrak{C} \|F(c_1^{-2}) - F(c_2^{-2})\| ,\quad
     \forall c_1^{-2}, c_2^{-2} \in Z \subset X .
\]
\end{enumerate}

In the previous sections, we examined these conditions and estimated
the behaviors of the constants with respect to the frequency,
$\omega$, and the number, $N$, of subdomains in the domain
partitioning. With Lemma~\ref{lemma:L-hat}, Lemma~\ref{lemma:L} and
Proposition~\ref{prop:Stab}, we have
\begin{equation}\label{cns}
\begin{aligned}
  \hat{\mathfrak{L}} =& \hat{\mathfrak{L}}_0 \omega^2 ,
\\
         \mathfrak{L}=& \mathfrak{L}_0 \omega^4 ,
\\
         \mathfrak{C} =& \omega^{-2}
            e^{K (1 + \omega^2 B_2) (|\Omega| / r_0^3)^{\frac{4}{7}}}
\end{aligned}
\end{equation}
while assuming that $\omega^2$ satisfies the conditions in
Proposition~\ref{uniformconstants}. We elucidated in
Remark~\ref{rem:cubpart} that $|\Omega| / r_0^3$ is effectively equal
to $N$ in the case of a uniform domain partitioning.

We now introduce the framework of discrete approximations of the
unique solution. Let $c_{\dagger}$ denote the unique solution of the
inverse boundary value problem and $y_\omega$ the data, that is, $y_\omega =
F_{\omega}(c_{\dagger}^{-2})$.

\medskip\medskip

\begin{definition}\label{def:aprxerr}
For a given domain partitioning $\mathcal{D}_N$, the approximation
error, $\eta_{\omega, \mathcal{D}_N}$, is given by
\[
\begin{array}{cll}
   \eta_{\omega,\mathcal{D}_N} :\
   \mathcal{L}(H^{1/2}(\partial \Omega),
        H^{-1/2}(\partial \Omega)) &\rightarrow \,\,& [0,\infty)
\\
   \,\, y_{\omega} & \mapsto \,\,& \operatorname{dist}(y_{\omega},
            F_{\omega}(W_N) .
\end{array}
\]
% where $W_N := \operatorname{span}(\chi_{D_1}, \chi_{D_2}, \dots ,
% \chi_{D_N})\cap \{c \in L^\infty(\Omega) \mid B_1 \leq c^{-2} \leq
% B_2 \})$.
\end{definition}

\medskip\medskip

We consider the error of the discrete approximation and assume that it
is bounded by some positive function $\varphi$ which is monotonically
decreasing in $N$:
\begin{equation}\label{phidef}
  \operatorname{dist}( c_\dagger^{-2},W_N ) \leq \varphi(N) .
\end{equation}
Lemma~\ref{lemma:L-hat} implies that
\begin{equation}\label{eta-behavior}
   \eta_{\omega,\mathcal{D}_N}(y)
            \leq \hat{\mathfrak{L}}_0 \omega^2 \varphi(N) .
\end{equation}
Indeed the error decreases as the frequency decreases.

The following projected steepest descent iteration is taken from
\cite{Hoop2012a}. We let $Z = W_N$.

\medskip\medskip

\begin{algorithm}\label{algo:1}
For $c_k^{-2}$, $k=0,1,2,\ldots$, we let
\begin{equation}
\label{abbreviation}
     R_k = F_{\omega,\mathcal{D}_N}(c_k^{-2}) - y_\omega \, ,\quad
     T_k = DF_{\omega,\mathcal{D}_N}(c_k^{-2})^*j_2(R_k) \, ,\quad
     r_k = \|R_k\| \, ,\quad t_k = \|T_k\| \; ,
\end{equation}
where $j_2$ stands for a single-valued selection of the normalized
duality mapping from the data space $Y$ to its dual. Moreover, we
define
% \commentO{It should be mentioned that they all the constants depend
% on $N$ and $\omega$. Something like
% $\tilde{\mathfrak{C}}=\tilde{\mathfrak{C}}(\omega,N)$}
\begin{equation}\label{abbreviation_2a}
\begin{aligned}
   \tilde{\mathfrak{C}} \, & := \mathfrak{L}\mathfrak{C}^2\,,
\\
   \rho & := \frac{1}{2}
(2\tilde{\mathfrak{C}}\hat{\mathfrak{L}})^{-2} \left(1+\sqrt{1-8\tilde{\mathfrak{C}}\eta} - 4\eta\tilde{\mathfrak{C}}\right)^2 .
\end{aligned}
\end{equation}
For $k=0,1,\ldots$ we set
\begin{equation}
\label{abbreviation_2}
\begin{aligned}
     u_k & := -\tilde{\mathfrak{C}} r_k^2
       + (1 - 2\tilde{\mathfrak{C}}\eta)r_k -\eta -\tilde{\mathfrak{C}} \eta^2 \,,\\
     v_k & := t_k^{-2} u_k r_k^{2}(r_k - \eta) - \frac{1}{2} t_k^{-2} u_k^2 r_k^{2} \,,\\
     w_k & :=\mathfrak{L} t_k^{-2}u_k  r_k^{2} \,,
\end{aligned}
\end{equation}
and
\begin{equation}
   \mu_k := t_k^{-2} u_k r_k \; .
\end{equation}
The iteration is given by
\begin{enumerate}[$(S1)$]
\setcounter{enumi}{-1}
\item Choose a starting point $c_0^{-2} \in Z$ such that
\begin{equation}\label{converge-radius0}
   \Delta_2(c_0^{-2},z^\dag) < \rho ,
\end{equation}
where $z^\dag \in Z$ satisfies
\[
   \operatorname{dist}(y_\omega,F_{\omega,\mathcal{D}_N}(z^\dag))
                  = \eta_{\omega,\mathcal{D}_N}(y_\omega) .
\]
\item Compute the new iterate via
\begin{equation}\label{Steepest descent}
\begin{array}{rl}
   \tilde{c}_{k+1}^{-2}  = &c_{k}^{-2} - \mu_k T_k
\\[0.2cm]
   c_{k+1}^{-2}  = & \mathcal{P}_Z(\tilde{c}_{k+1}^{-2}) .
\end{array}
\end{equation}
Set $k \leftarrow k + 1$ and repeat this step.
\end{enumerate}
\end{algorithm}

\medskip\medskip

Due to the projection $\mathcal{P}_Z$ applied, all iterates belong to
the `stable subset' $Z$, which in general can only offer an
approximation to the unique solution. The dimension, $N$, of $Z$
should be low to ensure a large radius of convergence, $\rho$
(cf.~(\ref{abbreviation_2a})). In \cite{Hoop2012a} we introduced a
multi-level approach to enable a gradual refinement of the domain
partitioning. Let $n$ denote the level index. As $n$ increases, the
number of subdomains, $N_n$, grows while the approximation error
decreases. We introduce a hierarchy of domain partitionings,
$\mathcal{D}_{N_n}$, and corresponding
\begin{equation}\label{stab-subset}
  Z_n = W_{N_n} .
\end{equation}
Given a frequency $\omega$, to each level, we assign a domain partitioning. We identify
$\mathcal{D}_{N_n}$ with $\mathcal{D}_n$,
$F_{\omega,\mathcal{D}_{N_n}}$ with $F_n$ and
$\eta_{\omega,\mathcal{D}_{N_n}}$ with $\eta_n$. For the given frequency $\omega$, the (attainable) data $y_{\omega}$ is denoted by $y$. We identify
$\hat{\mathfrak{L}}_n$, $\mathfrak{L}_n$ and $\mathfrak{C}_n$ with the
expressions in \eqref{cns} replacing $N$ by $N_n$. Similarly, we identify the radius $\rho_n$
with the expression for $\rho$ in \eqref{abbreviation_2a} replacing
$\hat{\mathfrak{L}}$, $\mathfrak{L}$, $\mathfrak{C}$ and $\eta$ by
$\hat{\mathfrak{L}}_n$, $\mathfrak{L}_n$, $\mathfrak{C}_n$ and $\eta_n$,
respectively. For simplicity of notation, we omit the subscript in the
operator norm. In the following algorithm, $c^{-2}_{n,k}$ denotes the
$k^{\mathrm{th}}$ iterate at level $n$.

\medskip\medskip

\begin{algorithm}\label{algo:prog}
\begin{enumerate}[$(S1)$]
\setcounter{enumi}{-1}
\item Set $n=0$. Use  $c^{-2}_{0,0}$ as the starting point, with
  $\Delta_2(c_{0,0}^{-2},c_{\dag,0}^{-2}) < \rho_0$ where
  $c_{\dag,0}^{-2}$ is the best $\mathcal{D}_0$ approximation to
  $c_{\dag}^{-2}$.
\\[0.1cm]
\item Iteration. Use $F_n$ and $Z_n$ as the modelling operator and the
  convex subset to run Algorithm~\ref{algo:1} with the discrepancy
  criterion given by
  \begin{equation}\label{discrepancy-principle-prog}
     K_n = \min \{ k \in\mathbb{N}  \mid
     \| F_n(c_{n,k}^{-2}) - y_n \| \le (3+\varepsilon)\eta_n \} ,
  \end{equation}
  where $\varepsilon > 0$ is a given small tolerance constant. That
  is, this algorithm stops at $k = K_n$.
\\[0.1cm]
\item Set $c^{-2}_{n+1,0} = c^{-2}_{n, K_n}$, refine the domain
  partitioning to $\mathcal{D}_{n+1}$ such that the corresponding constants
  $\hat{\mathfrak{L}}_{n+1}$, $\mathfrak{L}_{n+1}$,
  $\mathfrak{C}_{n+1}$ and the approximation error $\eta_{n+1}$
  satisfy the inequalities
  \begin{equation}\label{level-cont}
  \left\{\begin{array}{rll}
  8 \tilde{\mathfrak{C}}_{n+1}\eta_{n+1} < & 1 ,
\\
  (3 + \varepsilon) \eta_n +\eta_{n+1} \leq &
  2^{-5/2}  (\hat{\mathfrak{L}}_{n+1} \mathfrak{C}_{n+1}
               \tilde{\mathfrak{C}}_{n+1})^{-1} .
  \end{array}\right.
  \end{equation}
  Set $n = n+1$ and go to step $(S1)$.
\end{enumerate}
\end{algorithm}

\medskip\medskip

\begin{remark}\label{rem:mlc}
Because
\[
   1 - 8 \tilde{\mathfrak{C}}_{n+1}\eta_{n+1} > 0
\]
implies that
\[
   1 + \sqrt{1 - 8 \tilde{\mathfrak{C}}_{n+1} \eta_{n+1}}
              - 4 \tilde{\mathfrak{C}}_{n+1} \eta_{n+1} > 1/2 ,
\]
the inequalities in \eqref{level-cont} imply the original multi-level
criterion \cite{Hoop2012a}
\begin{equation}\label{cong-cri}
   (3 + \varepsilon) \eta_n <
   2^{-1/2} (\hat{\mathfrak{L}}_{n+1} \mathfrak{C}_{n+1})^{-1}
   \left(\frac{1 + \sqrt{1 -
         8 \tilde{\mathfrak{C}}_{n+1} \eta_{n+1}}}{
    2 \tilde{\mathfrak{C}}_{n+1}} - 2\eta_{n+1}\right) - \eta_{n+1} .
\end{equation}
\end{remark}

\medskip\medskip

This algorithm is adapted to the inverse boundary value problem for
the Helmholtz equation as follows. The multi-level conditions in
(\ref{level-cont}) are replaced by
\begin{equation}\label{eq:om1}
  \varphi(N_{n+1}) -
  8^{-1} \omega^{-2} (\mathfrak{L}_0 \hat{\mathfrak{L}}_0)^{-1}
    e^{-2K (1 + \omega^2 B_2) N_{n+1}^{\frac{4}{7}}} < 0 ,
\end{equation}
and
\begin{equation}\label{eq:om2}
   (3 + \varepsilon) \varphi(N_n) + \varphi(N_{n+1}) -
   2^{-5/2} \omega^{-2}(\hat{\mathfrak{L}}_0^2 \mathfrak{L}_0)^{-1}
     e^{-3K (1 + \omega^2 B_2) N_{n+1}^{\frac{4}{7}}} \le 0 .
\end{equation}
We now show that (\ref{eq:om1})-(\ref{eq:om2}) are the proper
multi-level conditions. These \textit{provide an upper bound for the
  increment in the number of subdomains from level to level}. One can
no longer increase $N$ when $N_{\mathrm{max}}$ is reached, which
follows from setting $N_{n+1} = N_n = N_{\mathrm{max}}$ in
(\ref{eq:om2}):
\begin{equation}
   (4 + \varepsilon) \varphi(N_{\mathrm{max}}) - 2^{-5/2}
   \omega^{-2}(\hat{\mathfrak{L}}_0^2 \mathfrak{L}_0)^{-1}
      e^{-3K (1 + \omega^2 B_2) N_{\mathrm{max}}^{\frac{4}{7}}} = 0 .
\end{equation}
We note that $\varphi$ signifies a compression rate while
approximating the unique solution with piecewise constant functions
corresponding with the given hierarchy of domain partitionings. If
$\varphi$ decays fast enough, such a (finite) $N_{\mathrm{max}}$ does
not exist.

Indeed, let $c_{\dagger,n}$ be the $\mathcal{D}_n$-best approximation
to $c_{\dagger}$ which gives the approximation error $\eta_n =
\eta_{\omega, \mathcal{D}_{N_n}}(y)$ in
Definition~\ref{def:aprxerr}. We have
\[
   \| c_{n, K_n}^{-2} - c_{\dagger}^{-2} \| \leq
   \| c_{n, K_n}^{-2} - c_{\dagger, n}^{-2} \|
         + \| c_{\dagger, n}^{-2} - c_{\dagger}^{-2} \| .
\]
For the first term, noting that both $c_{n, K_n}^{-2}$ and
$c_{\dagger, n}^{-2}$ belong to the `stable subset', we can apply the
stability estimate, Proposition~\ref{prop:Stab}, to arrive at
\begin{equation}\label{err-help1}
   \|c_{n,K_n}^{-2} - c_{\dagger,n}^{-2}\|
      \leq \omega^{-2} e^{K (1 + \omega^2 B_2) N_n^{\frac{4}{7}}}
                 \|F_n(c_{n, K_n}^{-2}) - F_n(c_{\dagger, n}^{-2})\| .
\end{equation}
Here, we have replaced $|\Omega| / r_0^3$ by the number, $N$, of
subdomains in the partitioning following Remark~\ref{rem:cubpart}.

By the discrepancy criterion \eqref{discrepancy-principle-prog}, we
know that
\[
   \| F_n(c_{n,K_n}^{-2}) - y_n \| \le (3+\varepsilon) \eta_n .
\]
Hence,
\[
   \| F_n(c_{n, K_n}^{-2}) - F_n(c_{\dagger, n}^{-2}) \| \le
   \| F_n(c_{n, K_n}^{-2}) - y_n \|
             + \| F_n(c_{\dagger, n}^{-2}) - y \|
   \le (4+\varepsilon) \eta_n .
\]
Substituting the above inequality into \eqref{err-help1}, we obtain
\[
   \| c_{n, K_n}^{-2} - c_{\dagger, n}^{-2} \|
   \le (4+\varepsilon) \omega^{-2}
                 e^{K (1 + \omega^2 B_2) N_n^{\frac{4}{7}}} \eta_n
\]
so that
\begin{equation}\label{errest-n}
   \| c_{n, K_n}^{-2} - c_{\dagger}^{-2} \| \leq
   (4+\varepsilon) \omega^{-2}
         e^{K (1 + \omega^2 B_2) N_n^{\frac{4}{7}}} \eta_n
                   + \| c_{\dagger, n}^{-2} - c_{\dagger}^{-2} \| .
\end{equation}

We use the estimates of $\eta_{n+1}$ in \eqref{eta-behavior}, and of
$\mathfrak{L}_{n+1}$ in Lemma~\ref{lemma:L} to obtain,
\[
   \tilde{\mathfrak{C}}_{n+1} \eta_{n+1}
      \leq \mathfrak{L}_0 \hat{\mathfrak{L}}_0
           \mathfrak{C}_{n+1}^2 \omega^6  \varphi(N_{n+1}) .
\]
Substituting the estimate of the stability constant in
Proposition~\ref{prop:Stab} into the above inequality, we get
\begin{equation}\label{est-h1}
   \tilde{\mathfrak{C}}_{n+1}\eta_{n+1}
      \leq \mathfrak{L}_0 \hat{\mathfrak{L}}_0 \omega^2
    e^{2K (1 + \omega^2 B_2) N_{n+1}^{\frac{4}{7}}}  \varphi(N_{n+1}) .
\end{equation}
% \commentO{How do we get in the estimate $\|\lambda _1-\Lambda_2\|?$}
% \commentO{Details?}
We also find that
\begin{equation}\label{est-h2}
   \hat{\mathfrak{L}}_{n+1}
         \mathfrak{C}_{n+1} \tilde{\mathfrak{C}}_{n+1}
               ((3 + \varepsilon) \eta_n +\eta_{n+1} )
   \leq \hat{\mathfrak{L}}_0^2 \mathfrak{L}_0 \omega^2
      e^{3K (1 + \omega^2 B_2) N_{n+1}^{\frac{4}{7}}}
   \left[(3 + \varepsilon)  \varphi(N_n)
       +  \varphi(N_{n+1}) \right] .
\end{equation}
Using \eqref{est-h1}, we conclude that (\ref{eq:om1}) implies that the
first inequality in \eqref{level-cont} is satisfied; using
\eqref{est-h2}, we find that (\ref{eq:om2}) implies that the second
inequality in \eqref{level-cont} is satisfied.

\medskip\medskip

In our inverse problem, we fix the frequency $\omega$ and specify the
level by the domain partitioning. Our multi-level scheme starts at a
coarse domain partitioning. We let $\mathcal{D}_{N_0}$ be an initial
domain partitioning of $\Omega$ with the number of subdomains being
equal to $N_0$. This initial domain partitioning needs to allow a
large-scale approximation of the unique solution in
$\Omega$. Algorithm~\ref{algo:prog} is designed such that the starting
point $c^{-2}_{n+1,0}$ at level $n+1$, which equals $c^{-2}_{n,K_n}$,
is within the $(n+1)-$level radius of convergence, $\rho_{n+1}$
\cite{Hoop2012a}. Therefore, the iterations can continue until the the
accuracy limitation is reached. In the above, (\ref{level-cont})
yields a sufficient multi-level condition balancing the competition
between the approximation errors and the convergence radii of
neighboring levels \cite{Hoop2012a}.

\subsection*{Lowering the frequency}

In the next theorem, we prove that by letting the frequency go to
zero, the overall convergence radius tends to infinity. We note that
the statement in the theorem does not give a lower bound of the
frequency that is uniform for all unique solutions and starting
points. The frequency needs to satisfy the conditions in Propositions
\ref{uniformconstants} or \ref{continuity} in the absence of prior
knowledge of $\Sigma_{c_{\dagger}^{-2}}$.

\medskip\medskip

\begin{theorem}\label{thm:large-conv}
Let $\mathcal{D}_{N_0}$ be an arbitrary initial domain partitioning
and $y$ be the given data. Then, for any positive number $M_0$, there
exists a frequency $\omega_0$ low enough such that the convergence
radius $\rho \ge M_0$.
\end{theorem}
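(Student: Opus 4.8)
The plan is to track the explicit dependence of the convergence radius $\rho$ in \eqref{abbreviation_2a} on the frequency $\omega$, feeding in the relations \eqref{cns} together with the bound \eqref{eta-behavior} on the approximation error. Throughout I would work in the admissible low-frequency regime \eqref{smallfr}: fix once and for all an upper bound $\bar\omega$ with $\bar\omega^2 < \lambda_1/B_2$ kept away from the threshold, and restrict attention to $0 < \omega \le \bar\omega$. By Proposition~\ref{uniformconstants}, Lemma~\ref{lemma:L-hat} and Lemma~\ref{lemma:L}, the constants $\hat{\mathfrak{L}}_0$, $\mathfrak{L}_0$ and $K$ are then bounded by quantities depending only on the a-priori data and $\bar\omega$, and in particular do not deteriorate as $\omega \to 0$. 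The initial partitioning $\mathcal{D}_{N_0}$ is fixed, so $N_0$ and hence $\varphi(N_0)$ are constants independent of $\omega$.

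First I would record the frequency scaling of the three composite quantities entering $\rho$. From \eqref{cns}, the product $\tilde{\mathfrak{C}} = \mathfrak{L}\mathfrak{C}^2 = \mathfrak{L}_0\,\omega^4 \cdot \omega^{-4} e^{2K(1+\omega^2 B_2)N_0^{4/7}} = \mathfrak{L}_0\, e^{2K(1+\omega^2 B_2)N_0^{4/7}}$, so the $\omega^{\pm 4}$ powers cancel and $\tilde{\mathfrak{C}}$ remains bounded on $(0,\bar\omega]$. By contrast $\hat{\mathfrak{L}} = \hat{\mathfrak{L}}_0\,\omega^2 = O(\omega^2)$, and by \eqref{eta-behavior} the approximation error satisfies $\eta \le \hat{\mathfrak{L}}_0\,\omega^2\,\varphi(N_0) = O(\omega^2)$. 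Consequently $8\tilde{\mathfrak{C}}\eta = O(\omega^2) \to 0$ as $\omega \to 0$.

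Next I would bound the two factors of $\rho$ separately. For $\omega$ small enough that $8\tilde{\mathfrak{C}}\eta \le \tfrac12$, the discriminant is real and $1 + \sqrt{1 - 8\tilde{\mathfrak{C}}\eta} - 4\eta\tilde{\mathfrak{C}} \ge 1 + \tfrac{1}{\sqrt2} - \tfrac12 > 1$, so the squared bracket in \eqref{abbreviation_2a} is bounded below by a fixed positive constant $c_\ast$. For the prefactor, using the upper bounds $\tilde{\mathfrak{C}} \le \tilde C_\ast$ and $\hat{\mathfrak{L}} \le \hat L_\ast\,\omega^2$ gives $2\tilde{\mathfrak{C}}\hat{\mathfrak{L}} \le 2\tilde C_\ast \hat L_\ast\,\omega^2$, whence $(2\tilde{\mathfrak{C}}\hat{\mathfrak{L}})^{-2} \ge (2\tilde C_\ast \hat L_\ast)^{-2}\,\omega^{-4}$. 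Combining the two estimates,
\[
   \rho \ge \tfrac12\,(2\tilde C_\ast\hat L_\ast)^{-2}\,c_\ast\,\omega^{-4} ,
\]
which diverges as $\omega \to 0$. Given any $M_0 > 0$, it then suffices to choose $\omega_0 \le \bar\omega$ small enough that the right-hand side exceeds $M_0$; for all such frequencies $\rho \ge M_0$.

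The only delicate point, and the one I would expect to guard most carefully, is the uniformity asserted in the first paragraph: one must verify that the nominally $\omega$-independent constants $\hat{\mathfrak{L}}_0$, $\mathfrak{L}_0$ and $K$ genuinely stay bounded as $\omega \to 0$, which is exactly what Proposition~\ref{uniformconstants} supplies once the range is kept away from $\lambda_1/B_2$. Granting this, the mechanism is transparent: the exponential stability constant $\mathfrak{C}$ grows like $\omega^{-2}$ but is tamed inside $\tilde{\mathfrak{C}} = \mathfrak{L}\mathfrak{C}^2$ by the $\omega^4$ factor in the Lipschitz constant $\mathfrak{L}$, leaving the $\omega^2$ decay of $\hat{\mathfrak{L}}$ in the denominator of $\rho$ to drive the divergence.
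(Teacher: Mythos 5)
Your proof is correct and follows essentially the same route as the paper's: both exploit the cancellation of the $\omega^{\pm 4}$ factors in $\tilde{\mathfrak{C}} = \mathfrak{L}\mathfrak{C}^2$, bound the bracket $1+\sqrt{1-8\tilde{\mathfrak{C}}\eta}-4\tilde{\mathfrak{C}}\eta$ below by a fixed constant once $8\tilde{\mathfrak{C}}\eta$ is made small, and let the $O(\omega^2)$ decay of $\tilde{\mathfrak{C}}\hat{\mathfrak{L}}$ in the denominator drive $\rho \to \infty$. Your explicit attention to keeping $\hat{\mathfrak{L}}_0$, $\mathfrak{L}_0$, $K$ uniform via the regime \eqref{smallfr} is a point the paper handles only in the remark preceding the theorem, but it is the same argument.
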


\begin{proof}
The proof follows by examining the behavior of the convergence radius
with respect to the frequency. All the constant are associated with
the initial level. For simplicity of notation, we omit the subscripts.
We note that $\rho$ in \eqref{abbreviation_2a} can be written as
\begin{equation}\label{eq:rholf}
   (2\rho)^{1/2} = \frac{1+\sqrt{1-8\tilde{\mathfrak{C}}\eta}
       - 4\tilde{\mathfrak{C}} \eta}{
             2\mathfrak{L}\mathfrak{C}^2\hat{\mathfrak{L}}} .
\end{equation}
The constant $\tilde{\mathfrak{C}}$ does not depend on
frequency. First, we provide a uniform lower bound for the numerator
\[
   1+\sqrt{1-8\tilde{\mathfrak{C}}\eta} - 4\tilde{\mathfrak{C}} \eta .
\]
From \eqref{cns} and \eqref{eta-behavior}, we conclude that
\[
   \tilde{\mathfrak{C}} \eta = \mathfrak{L}\mathfrak{C}^2 \eta
\leq \mathfrak{L}_0 \, e^{2K (1 + \omega^2 B_2) (|\Omega| / r_0^3)^{\frac{4}{7}}} \hat{\mathfrak{L}}_0 \omega^2 \varphi(N) .
\]
We choose $\omega$ sufficiently small such that $8\tilde{\mathfrak{C}}
\eta < 1$. Then,
\[
1+\sqrt{1-8\tilde{\mathfrak{C}}\eta} - 4\tilde{\mathfrak{C}} \eta = \frac{1}{2} (\sqrt{1-8\tilde{\mathfrak{C}}\eta} + 1)^2 \geq \frac{1}{2}.
\]
Futhermore, the denominator,
$\mathfrak{L}\mathfrak{C}^2\hat{\mathfrak{L}}$, in (\ref{eq:rholf})
satisfies the estimate,
\[
\mathfrak{L}\mathfrak{C}^2\hat{\mathfrak{L}} \leq
\mathfrak{L}_0  \hat{\mathfrak{L}}_0 \,  e^{2K (1 + \omega^2 B_2) (|\Omega| / r_0^3)^{\frac{4}{7}}} \omega^{2} ,
\]
following from \eqref{cns}. Hence, it tends to zero as the frequency
goes to zero. This completes the proof.
\end{proof}

\section{Discussion}

As an example of forming a multi-scale hierarchy, $W_{N_n}$, we
mention the use of Haar wavelets and `local' scale refinement. We
refer to convergence of our scheme from the following point of
view. Within one level, $n$, the iterates converge in the sense that
the distance between the iterate and the ball centered at the
$Z_n$-best approximation with a certain tolerance radius tends to
zero. This tolerance radius is determined by the approximation error.
We use a ball, instead of a point, to describe the convergence. This
is because if the iterates get sufficiently close to the $Z_n$-best
approximation, the descent direction will mainly drive the iterate out
of the `stable set' while, still, the projection will pull it back.

In the multi-level scheme, convergence means that the result
$c_{n,K_n}$ converges to the true solution, $c^{\dagger}$, as the
approximation error $\eta_n$ tends to $0$. Unless $\varphi$ decays
sufficiently fast, such a convergence is not guaranteed. We mention
that the high-frequency regime yields different
estimates~\footnote{Recently, increasing stability-type estimates for
  the inverse boundary value problem for the Helmholtz equation with
  increasing frequency have been obtained
  \cite{Nagayasu2013,Isakov2013}; however, the stability constants in
  these estimates do not enable the application of our scheme.} and we
expect that the multi-level condition will change accordingly.

\section*{Acknowledgements}

The authors thank the members, BGP, ExxonMobil, PGS, Statoil and
Total, of the Geo-Mathematical Imaging for partial support. The work
of E. Beretta was partially supported by MIUR grant PRIN
20089PWTPS003. The research of M.V. de Hoop and L. Qiu was supported
in part by National Science Foundation grant CMG DMS-102531. The work
of O. Scherzer has been supported by the Austrian Science Fund (FWF)
within the national research networks Photo\-acoustic Imaging in
Biology and Medicine, project S10505 and Geometry and Simulation
S11704. The paper has been completed during a stay at the Erwin
Schr\"odinger Institute (ESI) of M.V. de Hoop. The support of ESI is
appreciated.

%    Bibliographies can be prepared with BibTeX using amsplain,
%    amsalpha, or (for "historical" overviews) natbib style.
\bibliographystyle{amsplain}
%    Insert the bibliography data here.

\bibliography{conv}

\providecommand{\bysame}{\leavevmode\hbox to3em{\hrulefill}\thinspace}
\providecommand{\MR}{\relax\ifhmode\unskip\space\fi MR }
% \MRhref is called by the amsart/book/proc definition of \MR.
\providecommand{\MRhref}[2]{%
  \href{http://www.ams.org/mathscinet-getitem?mr=#1}{#2}
}
\providecommand{\href}[2]{#2}
\begin{thebibliography}{10}

\bibitem{Alessandrini1988}
Giovanni Alessandrini, \emph{Stable determination of conductivity by boundary
  measurements}, Appl. Anal. \textbf{27} (1988), no.~1-3, 153--172. \MR{922775
  (89f:35195)}

\bibitem{Alessandrini2005}
Giovanni Alessandrini and Sergio Vessella, \emph{{Lipschitz} stability for the
  inverse conductivity problem}, Adv. in Appl. Math. \textbf{35} (2005), no.~2,
  207--241. \MR{2152888 (2006d:35289)}

\bibitem{Bao2007}
Gang Bao, Songming Hou, and Peijun Li, \emph{Inverse scattering by a
  continuation method with initial guesses from a direct imaging algorithm}, J.
  Comput. Phys. \textbf{227} (2007), no.~1, 755--762. \MR{2361544
  (2008h:65038)}

\bibitem{Bao2005a}
Gang Bao and Peijun Li, \emph{Inverse medium scattering problems for
  electromagnetic waves}, SIAM J. Appl. Math. \textbf{65} (2005), no.~6,
  2049--2066 (electronic). \MR{2177738 (2006i:78018)}

\bibitem{Bao2010}
Gang Bao and Faouzi Triki, \emph{Error estimates for the recursive
  linearization of inverse medium problems}, J. Comput. Math. \textbf{28}
  (2010), no.~6, 725--744. \MR{2765913 (2011j:35254)}

\bibitem{Hadj-Ali2008}
Hafedh Ben-Hadj-Ali, Stéphane Operto, and Jean Virieux, \emph{Velocity
  model-building by 3d frequency-domain, full-waveform inversion of
  wide-aperture seismic data}, Geophysics \textbf{73} (2008).

\bibitem{Beretta2012}
Elena Beretta, Maarten~V. de~Hoop, and Lingyun Qiu, \emph{{L}ipschitz stability
  of an inverse boundary value problem for a {S}chr\"odinger-type equation},
  SIAM J. Math. Anal. \textbf{45} (2013), no.~2, 679--699. \MR{3035477}

\bibitem{Beretta2011}
Elena Beretta and Elisa Francini, \emph{{Lipschitz} stability for the
  electrical impedance tomography problem: the complex case}, Comm. Partial
  Differential Equations \textbf{36} (2011), no.~10, 1723--1749. \MR{2832161}

\bibitem{Bunks1995}
C.~Bunks, F.~Saleck, S.~Zaleski, and G.~Chavent, \emph{Multiscale seismic
  waveform inversion}, GEOPHYSICS \textbf{60} (1995), no.~5, 1457--1473.

\bibitem{Chen2009}
Yu~Chen, Ran Duan, and Vladimir Rokhlin, \emph{On the inverse scattering
  problem in the acoustic environment}, J. Comput. Phys. \textbf{228} (2009),
  no.~9, 3209--3231. \MR{2513830 (2010f:35431)}

\bibitem{Cioranescu1990}
Ioana Cioranescu, \emph{Geometry of {B}anach spaces, duality mappings and
  nonlinear problems}, Mathematics and its Applications, vol.~62, Kluwer
  Academic Publishers Group, Dordrecht, 1990. \MR{1079061 (91m:46021)}

\bibitem{Hoop2012a}
Maarten~V de~Hoop, Lingyun Qiu, and Otmar Scherzer, \emph{An analysis of a
  multi-level projected steepest descent iteration for nonlinear inverse
  problems in {B}anach spaces subject to stability constraints}, Numerische
  Mathematik (to appear).

\bibitem{Haehner1996}
Peter H{\"a}hner, \emph{A periodic {F}addeev-type solution operator}, J.
  Differential Equations \textbf{128} (1996), no.~1, 300--308. \MR{1392403
  (97d:35021)}

\bibitem{Isakov2013}
Victor Isakov, Sei Nagayasu, Gunther Uhlmann, and Jenn-Nan Wang,
  \emph{Increasing stability of the inverse boundary value problem for the
  {S}chr\"{o}dinger equation}, ArXiv e-prints (2013).

\bibitem{Lasiecka1983}
Irena Lasiecka and Roberto Triggiani, \emph{Regularity of hyperbolic equations
  under {$L_{2}(0,\,T;L_{2}(\Gamma ))$}-{D}irichlet boundary terms}, Appl.
  Math. Optim. \textbf{10} (1983), no.~3, 275--286. \MR{722491 (85j:35111)}

\bibitem{Magnanini1985}
Rolando Magnanini and Gloria Papi, \emph{An inverse problem for the {H}elmholtz
  equation}, Inverse Problems \textbf{1} (1985), no.~4, 357--370. \MR{824135
  (87h:35341)}

\bibitem{Mandache2001}
Niculae Mandache, \emph{Exponential instability in an inverse problem for the
  {S}chr\"odinger equation}, Inverse Problems \textbf{17} (2001), no.~5,
  1435--1444. \MR{1862200 (2002h:35339)}

\bibitem{Nagayasu2013}
Sei Nagayasu, Gunther Uhlmann, and Jenn-Nan Wang, \emph{Increasing stability in
  an inverse problem for the acoustic equation}, Inverse Problems \textbf{29}
  (2013), no.~2, 025012, 11. \MR{3020433}

\bibitem{Novikov2011}
Roman~G. Novikov, \emph{New global stability estimates for the
  {G}el'fand-{C}alderon inverse problem}, Inverse Problems \textbf{27} (2011),
  no.~1, 015001, 21. \MR{2746404 (2011m:65258)}

\bibitem{Salo}
Mikko Salo, \emph{{C}aldr{\'o}n problem}, unpublished ed.

\bibitem{Schopfer2006}
Frank Sch\"{o}pfer, Alfred~Karl Louis, and Thomas Schuster, \emph{Nonlinear
  iterative methods for linear ill-posed problems in {B}anach spaces}, Inverse
  Problems \textbf{22} (2006), no.~1, 311--329. \MR{2194197 (2006j:65151)}

\bibitem{Sirgue2004}
L.~Sirgue and R.~Pratt, \emph{Efficient waveform inversion and imaging: A
  strategy for selecting temporal frequencies}, GEOPHYSICS \textbf{69} (2004),
  no.~1, 231--248.

\bibitem{Sylvester1987}
John Sylvester and Gunther Uhlmann, \emph{A global uniqueness theorem for an
  inverse boundary value problem}, Ann. of Math. (2) \textbf{125} (1987),
  no.~1, 153--169. \MR{873380 (88b:35205)}

\bibitem{Symes2009}
W.~W. Symes, \emph{The seismic reflection inverse problem}, Inverse Problems
  \textbf{25} (2009), no.~12, 123008, 39. \MR{2565574 (2011a:86012)}

\end{thebibliography}

\end{document}